\newtheorem{alem}{Lemma}
\numberwithin{equation}{subsection}
\numberwithin{subsection}{section}
\newtheorem*{namedtheorem}{\theoremname}
\newcommand{\theoremname}{testing}
\newtheorem{theorem}[subsubsection]{Theorem}
\newtheorem{proposition}[subsubsection]{Proposition}
\newtheorem{proposition-definition}[subsubsection]
{Proposition-Definition}
\newtheorem{corollary}[subsubsection]{Corollary}
\newtheorem{lemma}[subsubsection]{Lemma}
\theoremstyle{definition}
\newtheorem{adef}[alem]{Definition}
\newtheorem{definition}[subsubsection]{Definition}
\newtheorem{remark}[subsubsection]{Remark}
\newtheorem{examples}[subsubsection]{Example}
\renewcommand{\thesubsubsection}{\ifnum\value{subsection}=0
	\arabic{section}.\arabic{subsubsection}%
\else
	\arabic{section}.\arabic{subsection}.\arabic{subsubsection}%
\fi}
\let\c@equation\c@subsubsection
\let\subsection@old\subsection
\def\subsection#1{\ifnum\value{subsubsection}>0 \ifnum\value{subsection}=0
	\setcounter{subsection}{\value{subsubsection}}%
\fi \fi
\subsection@old{#1}}
\theoremstyle{remark}
\newcommand{\cX}{{\mathcal X}} 
\newcommand{\cY}{{\mathcal Y}} 
\newcommand\cS{\mathcal{S}}
\newcommand\bA{\mathbf{A}}
\newcommand{\double}{\genfrac..{0pt}1
{\raise -1pt\hbox{$\scriptstyle\longrightarrow$}}{\raise 3pt\hbox
{$\scriptstyle\longrightarrow$}}}
\begin{document}
\large
\title[Stacky factorization]{\large Factorization for stacks and boundary complexes \\ }
\author{ Alicia Harper}
\date{\today}

\maketitle

\begin{abstract}
We prove a weak factorization result on birational maps of Deligne-Mumford stacks, and deduce the following: Let $U \subset X$ be an open embedding of smooth Deligne-Mumford stacks such that $D = X-U$ is a normal crossings divisor, then the the simple homotopy type of the boundary complex $\Delta(X,D)$ depends only on $U$.
\end{abstract}

\section {Introduction}

A recent result of Bergh \cite[Corollary 1.4]{Bergh} allows one to reconstruct a Deligne-Mumford stack from an algebraic space by a sequence of well understood morphisms such as root stacks, gerbes, and blow-ups of smooth centers. On the other hand, the weak factorization theorem of  \cite[Theorem 0.1.1]{AKMW}\cite{W-Cobordism} provides a tool for systematically decomposing a birational map into a sequence of relatively simple birational morphisms. More recently a variant of the weak factorization theorem, applicable more broadly to stacks and other geometric categories, was developed in \cite[Theorem 1.4.1]{AT2}.   Our main theorem can be regarded as a partial amalgamation of these two results, yielding a factorization theorem that takes into account both the stack structure as well as the underlying birational geometry.

\begin{theorem}
\label{main1}
Let $f: X_1 \dashrightarrow X_2$  be a birational map of smooth Deligne-Mumford stacks isomorphic over an open subset $U$, with the complements being normal crossings divisors $D_1 = X_1 - U$ and $D_2 = X_2 - U$, then there exists a sequence of rational maps  

$$ X_1 = Y_0 \dashrightarrow Y_1 \dashrightarrow \cdots Y_n = X_2 $$

such that

\begin{enumerate}
\item Each $Y_i$ is a smooth Deligne-Mumford stack and either $\phi_i : Y_i \to Y_{i+1}$ or $\phi_i :Y_{i+1} \to Y_i$ is a morphism isomorphic over $U$. 
\item The morphism $\phi_i$ is either a blow up of a smooth center having normal crossings with the divisor $Y_i - U$ (resp. $Y_{i+1}$ in the case of a morphism  $Y_i \to Y_{i+1}$) or a root stack construction centered on a component of the divisor $Y_i - U$ (resp. $Y_{i+1} - U$). 
\end{enumerate}
\end{theorem}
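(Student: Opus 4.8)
The plan is to prove the theorem by amalgamating the two ingredients highlighted above: the functorial weak factorization for stacks of \cite[Theorem 1.4.1]{AT2}, which governs the birational geometry, and the structural decomposition of Deligne--Mumford stacks of \cite[Corollary 1.4]{Bergh}, which governs the stacky structure along the boundary. The guiding principle is that over the common open $U$ the two compactifications already agree, so every modification one performs is supported on the boundary divisors $D_1$ and $D_2$; consequently the generic gerbe structure is fixed and \emph{no gerbe operations are needed}, leaving only blow-ups and root stacks along boundary components.

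First I would endow each $X_i$ with the logarithmic (toroidal) structure determined by the normal crossings divisor $D_i$, so that $f \colon X_1 \dashrightarrow X_2$ becomes a birational map of log smooth Deligne--Mumford stacks that is a log isomorphism over $U$. Applying \cite[Theorem 1.4.1]{AT2} in this setting produces a first factorization of $f$ through smooth stacks $Y_i$, in which each elementary step is an isomorphism over $U$ and is a blow-up or blow-down whose center is smooth and has normal crossings with the evolving boundary. In the stacky category these centers may carry nontrivial automorphisms concentrated along the boundary, so the steps are a priori ``stacky blow-ups'' rather than the two operations appearing in the statement.

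The second step refines each stacky modification using \cite[Corollary 1.4]{Bergh}. Applied to the stacks $Y_i$, Bergh's result decomposes the stacky structure into root stacks, gerbes, and representable blow-ups of smooth centers; since every morphism above is an isomorphism over the dense open $U$, the gerbe part is constant and drops out, while the stacky locus being confined to the boundary forces the root constructions to be centered on components of $Y_i - U$. Inserting these elementary moves into the sequence then yields exactly the two allowed types of $\phi_i$.

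The main obstacle will be preserving the normal crossings condition while interleaving the two kinds of operations. After subdividing to factor the birational map and then refining by root stacks, one must verify that at every intermediate stage the boundary $Y_i - U$ remains a normal crossings divisor and that each blown-up center meets it transversally; in particular the root constructions must always attach to honest boundary components rather than to exceptional loci created in a transversality-violating way. Controlling the order of these operations is the technical heart of the argument, and here the functoriality of the \cite{AT2} construction together with the explicit local toric models of both blow-ups and root stacks is what keeps the bookkeeping tractable.
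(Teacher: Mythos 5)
Your proposal inverts the logical order of the two main ingredients, and the inversion is fatal. The Abramovich--Temkin theorem used here, \cite[Theorem 1.4.1]{AT2}, applies to a \emph{blow-up} of Deligne--Mumford stacks, i.e.\ to a representable projective birational morphism; it does not apply to an arbitrary birational map of Deligne--Mumford stacks. So your first step --- ``apply \cite[Theorem 1.4.1]{AT2} in the log setting to $f\colon X_1 \dashrightarrow X_2$ to get a first factorization by stacky blow-ups'' --- is precisely the step that is unavailable: a birational map of stacks that is an isomorphism over $U$ need not be representable (the stack structures along $D_1$ and $D_2$ can be unrelated), and the paper states explicitly that producing a \emph{representable} morphism relating $X_1$ and $X_2$ \emph{before} any appeal to weak factorization is the main technical obstacle. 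The paper's actual route is: (I) reduce to snc divisors; (II) take the closure of the graph in $X_1\times X_2$ and resolve it by Temkin's theorem so that the rational map is replaced by proper morphisms $Z \to X_i$; (III) make the morphism representable by descending Bergh's destackification operations; (IV) upgrade ``representable proper'' to ``representable projective'' via Hironaka's Chow lemma applied on coarse spaces (Theorem \ref{eprop}); and only then (V) apply \cite[Theorem 1.4.1]{AT2}. Your proposal has no counterpart of (II)--(IV).

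Your second step suffers from a gap of the same kind. Bergh's result \cite[Corollary 1.4]{Bergh} cannot be applied ``to the stacks $Y_i$'' naively, because it destackifies absolutely (down to the coarse space), whereas the open substack $U$ may itself carry nontrivial stack structure that must not be modified; asserting that ``the gerbe part is constant and drops out'' does not produce an actual sequence of modifications trivial over $U$. This is exactly why the paper develops groupoids in Deligne--Mumford stacks, inertia-stable presentations (Definition \ref{iertstab}, Proposition \ref{stablepullback}), relative coarse moduli spaces (Theorem \ref{rcmthm}), and the descent statement Corollary \ref{opdescent}: one pulls $X_1$ back along a scheme atlas $Y_2 \to X_2$, runs Bergh's algorithm on the pullback $Y_1$ (which is generically a scheme, so only boundary root stacks and blow-ups with normal crossings centers occur, and no gerbes), and then descends each operation to $X_1$, yielding Theorem \ref{hatthm} and a representable morphism $\hat{X}_{1,r.cs} \to X_2$. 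Your closing concern about preserving normal crossings is legitimate but secondary --- it is handled by the functoriality built into both algorithms (Remark \ref{keyremark}) --- whereas the missing representability reduction is the heart of the matter.
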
 

One area where this theorem yields new insight is the study of boundary complexes. Answering a question in \cite[Section 5.3]{CGP}, we prove:

\begin{theorem}
\label{main2}
If $X_1$ and $X_2$ are smooth proper Deligne-Mumford stacks isomorphic over an open set $U$ and $D_1 = X_1-U$ and $D_2=X_2-U$ are both normal crossings divisors, then the associated boundary complexes $\Delta(X_1,D_1)$ and $\Delta(X_2,D_2)$ have the same simple homotopy type.
\end{theorem}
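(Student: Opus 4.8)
The plan is to deduce Theorem \ref{main2} from the factorization of Theorem \ref{main1} by checking that each elementary birational modification permitted there alters the boundary complex only through moves preserving simple homotopy type. Since ``having the same simple homotopy type'' is an equivalence relation on spaces, and since Theorem \ref{main1} presents $f \colon X_1 \dashrightarrow X_2$ as a zig-zag $X_1 = Y_0 \dashrightarrow \cdots \dashrightarrow Y_n = X_2$ in which each $\phi_i$ is a morphism (in one direction or the other) that is an isomorphism over $U$, it suffices to treat a single arrow. Writing $D_Y = Y - U$ for the boundary at each stage, I would show that whenever $\phi \colon Y \to Y'$ is either a root stack along a component of $D_{Y'}$ or a blow-up of a smooth center $Z$ meeting $D_{Y'}$ normally, the complexes $\Delta(Y, D_Y)$ and $\Delta(Y', D_{Y'})$ are simple homotopy equivalent; transitivity (and symmetry, which absorbs the direction of each arrow) along the zig-zag then gives the theorem.

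First I would dispatch the root-stack case. A root stack along a boundary component changes neither the set of irreducible components of the boundary nor the combinatorics of their mutual intersections; it modifies only the stacky structure transverse to that component, and, at the level of the generalized cone / symmetric $\Delta$-complex of \cite{CGP}, at most the integral or rational decoration of the corresponding ray. I would therefore observe that the underlying $\Delta$-complex, and hence its geometric realization, is literally unchanged, so the two boundary complexes are isomorphic and a fortiori simple homotopy equivalent.

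The substantive case is the blow-up. Because $\phi$ is an isomorphism over $U$, the center $Z$ is disjoint from $U$, so $Z \subseteq D_{Y'}$, and by hypothesis it meets the boundary normally. I would analyze the effect on strata: the exceptional divisor $E$ contributes a single new vertex $v_E$, while the proper transforms of the components through $Z$ and their intersections are reorganized exactly according to a stellar subdivision of the subcomplex of $\Delta(Y', D_{Y'})$ spanned by the components containing $Z$. In the model case where $Z$ is a connected component of a closed stratum $D_{i_1} \cap \cdots \cap D_{i_r}$ this is the elementary stellar subdivision of the simplex $\langle v_{i_1}, \dots, v_{i_r}\rangle$ at its barycenter $v_E$, and a general center produces an iterated such subdivision. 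The key input is then the classical fact from simple homotopy theory that a stellar subdivision induces a simple homotopy equivalence of geometric realizations.

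I expect this last step to be the main obstacle, with two points demanding care. First, the bookkeeping when $Z$ is not itself a single stratum but a smooth substack meeting $D_{Y'}$ transversally, where one must decompose the change in the stratification into standard subdivision pieces and confirm that the total move is still a composite of stellar subdivisions. Second, the presence of stacky structure on the components forces the boundary complex to be interpreted as a symmetric $\Delta$-complex in the sense of \cite{CGP}, so the subdivision lemma must be verified as a \emph{simple} (not merely weak) homotopy equivalence in that enriched setting rather than for ordinary simplicial complexes; I would isolate this as a standalone combinatorial lemma and then feed the output of Theorem \ref{main1} into it one arrow at a time.
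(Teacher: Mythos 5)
Your top-level plan coincides with the paper's: Theorem \ref{main2} is deduced from the factorization of Theorem \ref{main1} by showing that each elementary arrow preserves the simple homotopy type of the boundary complex, with the root-stack case disposed of exactly as you predict (the paper's Lemma \ref{rootdescent} shows the induced map of complexes is a bijection) and the blow-up case carrying all the content (the paper's Lemma \ref{centerdescent}). The reduction to a single arrow by symmetry and transitivity, and the observation that the center lies in the boundary, are also as in the paper.

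The gap is in your combinatorial analysis of the blow-up case. It is not true that a blow-up with center having normal crossings with the boundary acts on the boundary complex by (iterated) stellar subdivision, and your proposed confirmation that ``the total move is still a composite of stellar subdivisions'' would fail. Stellar subdivision is what happens only for strata \emph{contained} in the center $Z$. A stratum $\Gamma$ having proper nonempty normal crossings intersection with $Z$ (i.e.\ $\Gamma \not\subseteq Z$, $\Gamma \cap Z \neq \emptyset$) behaves differently: the exceptional vertex spans, together with the simplex of $\Gamma$, a \emph{new} simplex of one higher dimension lying over it, so the complex genuinely grows. Concretely, blow up a point of a single boundary component $D_1$ of the triangle $L_1 \cup L_2 \cup L_3 \subset \mathbb{P}^2$, the point not lying on the other components: the new complex is the triangle with a pendant edge from $v_E$ to $v_{L_1}$, which is not a subdivision of the triangle. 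The equivalence here is realized by an elementary collapse of the new cells, not by subdivision invariance. This dichotomy (subdivision for strata inside $Z$, expansion/collapse for strata meeting $Z$ properly) is precisely Stepanov's lemma \cite[Lemma 1]{stepanov}, depicted in Figure \ref{fig:my-label}, and it --- not the stellar subdivision fact --- is the key input.

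Second, on the enriched setting you flag as your standalone lemma: the paper does not prove a subdivision (or collapse) lemma directly for generalized $\Delta$-complexes, and a direct stratification analysis on the stack needs justification, since by Definition \ref{bcdef2} the boundary complex of a stack is a coequalizer over an \'etale presentation rather than literally ``vertices = components, cells = strata'' (monodromy can act nontrivially on strata). Lemma \ref{centerdescent} instead chooses an \'etale chart $U_1 \to X_1$, applies Stepanov's lemma to the chart and to the relations $U_1 \times_{X_1} U_1$, checks that the resulting subdivisions and collapses are compatible with the two projections, and descends the equivalence to the coequalizer. Your plan is repairable --- replace the subdivision claim by Stepanov's lemma and prove your combinatorial lemma by descent through a presentation --- but as written the core combinatorial step is wrong.
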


Recall that the boundary complex of a \textit{simple} normal crossing divisor on a smooth projective \textit{variety} $X$ is a combinatorial invariant encoding the way the irreducible components of $D$ intersect. It is a $\Delta$-complex with vertices corresponding to irreducible components of $D$ and a $k$-cell for every non-empty component of a  $k$-fold intersection. In the case where $X_1$ and $X_2$ are both complete varieties, it is a theorem of Stepanov \cite{stepanov} that the simple, or piece-wise linear, homotopy type of the boundary complex $\Delta(X_1,D_1)$ only depends on the open set $U$.  A generalized formulation of this result is given in \cite[Theorem 1.1]{payne1}. These results were in turn foreshadowed by ealier results:  the homological type was independent of the specific compactifiction - a consequence of work done in mixed Hodge theory.  We also note that one of the earliest results on independence of the homotopy type of boundary complexes is due to Danilov \cite{danilov}, and he went as far as possible without access to the weak factorization theorem. For a discussion of the history, we refer to Payne \cite[Section 1]{payne1}. In this paper, we will study the boundary complexes associated to  pairs $(Y,E$) where $Y$ is a smooth Deligne-Mumford stack and $E$ is a normal crossings divisor, this necessitates a more involved definition that was first stated in \cite[Section 5.2]{CGP} and recalled in Definition \ref{bcdef2} below.

Boundary complexes, despite being a seemingly coarse invariant of an algebraic variety, have come to play an increasingly prominent role in work around mirror symmetry \cite{gs1} \cite{ks1}. Indeed, given a toric degeneration of a Calabi-Yau variety $X$, Gross and Siebert have conjectured that a boundary complex enriched with a particular affine structure and associated with the special fiber, is precisely the base manifold relating $X$ with its mirror $\tilde{X}$. 

The proof of Theorem \ref{main2} given below is similar to \cite{stepanov}, though it necessarily relies on Theorem \ref{main1}. To prove Theorem \ref{main1}, we will utilize a result of Abramovich and Temkin\cite[Theorem 1.4.1]{AT2} enabling weak factorization in the category of stacks. However, before we can employ the weak factorization theorem, we must first relate $X_1$ and $X_2$ by a \textit{representable} morphism. Establishing this relationship is the main technical obstacle of the proof and it depends on a recent destackification result of Bergh  \cite[Corollary 1.4]{Bergh}

\subsection{Acknowledgements} I would like to thank Dori Bejleri for several useful comments and suggestions,  Melody Chan for explaining various combinatorial constructions, generously answering many questions, and providing several instructive examples. Above all, I would like to thank Dan Abramovich for outlining this project,  as well as his feedback and support at every step of it.

\subsection{Assumptions}

We will work exclusively over a field of characteristic 0. We shall write $(X,D)$ for a pair consisting of a smooth Deligne-Mumford stack and a simple normal crossings divisor $D$.
\section {Background}

Below, we will review the notion of a boundary complex, the functorial destackification theorem of  \cite{Bergh}, and the weak factorization theorems of \cite{AKMW} and \cite{AT2}.

\subsection{Boundary Complexes}

To motivate the construction, we begin with the simplest case.

\begin{definition}
Let $(X,D)$ be a pair consisting of a smooth Deligne-Mumford stack and a simple normal crossings divisor (an \textit{snc} divisor) and denote by  $D_1 , \ldots , D_r$  the irreducible components of $D$. The \textit{classical boundary complex} is a $\Delta$-complex and it is constructed as follows: It has a single vertex for each $D_i$, and a $k$-simplex for each irreducible component appearing in the intersection of $k$ distinct divisors $D_{i_1},..,D_{i_k}$. The $k$-simplex is glued to the lower dimensional simplicies in the obvious manner. We shall denote this object  by $\Delta^{cl}(X,D)$.
\end{definition}

To illustrate the basic structure of such objects, we recall an example of \cite[Example 2.3]{payne1}.

\begin{examples}
 In $\mathbb{P}^2$, we may find three lines $L_1,L_2,L_3$ in general position and take $D$ to be their union. The corresponding $\Delta$-complex is simply a triangle with vertices corresponding to the lines, and edges corresponding to the intersections. Note that the general position hypothesis is required to ensure that the divisor has simple normal crossings.

The theorem of \cite{stepanov} recalled below can be easily demonstrated here: If one blows up a single intersection point $P$, we obtain a new variety $X$ and the total transform of the divsor $D$ now has four irreducible components and the corresponding boundary complex is a square, a space homotopically equivalent to the triangle.
\end{examples}

Before stating the next theorem, we recall that two CW-complexes or simplicial sets are said to be \textit{simple homotopy equivalent} if they may be related by a sequence of collapsing or expanding $n$-cells. A definition suited to our purposes may be found in \cite[Remark 5.1]{payne1}.

\begin{theorem}
\cite[Theorem 1.1]{payne1} Let $(X,D)$ be a pair consisting of a variety $X$ and a simple normal crossings divisor $D$, then the simple homotopy type of the boundary complex $\Delta^{cl}(X,D)$ depends only on $U = X - D$. 
\end{theorem}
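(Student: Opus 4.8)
The plan is to prove the statement in its intended form: that any two simple normal crossings compactifications of the same open variety $U$ have boundary complexes of the same simple homotopy type. Given two pairs $(X_1,D_1)$ and $(X_2,D_2)$ with $X_1 - D_1 \cong X_2 - D_2 \cong U$, the identity on $U$ extends to a birational map $X_1 \dashrightarrow X_2$ that is an isomorphism over $U$. First I would invoke the weak factorization theorem of \cite{AKMW} to decompose this map into a sequence of blow-ups and blow-downs along smooth centers, each meeting the boundary with normal crossings and each an isomorphism over $U$ (in the non-proper case one reduces to the complete case by a standard argument). Because ``isomorphic over $U$'' forces every center to be contained in the boundary divisor, and because simple homotopy equivalence is transitive and symmetric, it suffices to analyze a single blow-up $\pi : X' \to X$ along a smooth center $Z \subseteq D$ having normal crossings with $D$, and to show $\Delta^{cl}(X',D')$ and $\Delta^{cl}(X,D)$ are simple homotopy equivalent, where $D' = \pi^{-1}(D)$.

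Next I would determine the effect of such a blow-up on the dual complex by a local computation. In \'etale-local coordinates $D$ is a union of coordinate hyperplanes and $Z$ is a coordinate subspace contained in some of them; let $J$ index the boundary components through the generic point of $Z$. The exceptional divisor $E$ is a new irreducible component of $D'$, giving a new vertex $v_E$, while the strict transforms $\tilde D_i$ recover the old components. The strata of $D'$ meeting $E$ are governed precisely by the subsets of $J$, so $\Delta^{cl}(X',D')$ is obtained from $\Delta^{cl}(X,D)$ by a modification concentrated on the closed star of the face $\sigma_J$ corresponding to $Z$, namely by coning the affected faces off $v_E$. When $Z$ is itself a closed stratum this modification is exactly a stellar subdivision of $\sigma_J$; when $Z$ is a proper smooth subvariety of its stratum it is an elementary expansion attaching a collapsible cone.

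Finally I would check that each such move is a simple homotopy equivalence: a stellar subdivision is a piecewise-linear homeomorphism and hence a composite of elementary expansions and collapses, and the cone attached in the remaining case collapses directly onto its base. The hard part will be the bookkeeping in this last step. The components $D_i$ and their intersection strata need not be irreducible or connected, so one must work with the $\Delta$-complex formalism rather than with simplicial complexes, and carefully identify which cells of $\Delta^{cl}(X',D')$ are new, which are subdivided, and which are unchanged, so as to exhibit an explicit elementary collapse in every configuration of $Z$ relative to the stratification. Controlling this combinatorics uniformly over all local models, and verifying that the local moves assemble into a single well-defined simple homotopy equivalence, is where the genuine work lies.
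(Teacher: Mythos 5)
You should know at the outset that the paper contains \emph{no proof} of this statement: it is quoted as background, with the proof deferred to Payne (building on Stepanov), and the route you propose --- weak factorization, reduction to a single blow-up whose center has normal crossings with the boundary, then a stellar-subdivision/collapse dichotomy --- is precisely the route of those sources; it is also the route the paper itself follows for its stacky generalization in Lemma~\ref{centerdescent}. One framing correction before the substantive gaps: the statement as quoted omits the essential hypothesis that $X$ be complete. The theorem is false for open $X$: with $U = \mathbb{A}^2 \setminus \{xy=0\}$, the pair $(\mathbb{A}^2, \{xy=0\})$ has boundary complex a segment, while $(\mathbb{A}^2\setminus\{0\},\ \{xy=0\}\setminus\{0\})$ has boundary complex two disjoint points. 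Moreover the weak factorization theorem of AKMW that you invoke is stated for complete varieties. So your parenthetical claim that ``in the non-proper case one reduces to the complete case by a standard argument'' should be deleted rather than elaborated --- no such reduction can exist.

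The genuine gaps are in your local analysis of a single blow-up. First, the new strata are \emph{not} ``governed precisely by the subsets of $J$'': a component $D_k$ with $k\notin J$ that meets $Z$ without containing it also has strict transform meeting $E$. For instance, with $X=\mathbb{A}^3$, $D=\{x=0\}\cup\{y=0\}$ and $Z$ the $y$-axis, one has $J=\{1\}$, yet blowing up turns the edge $\Delta^{cl}(X,D)$ into a full triangle whose new $2$-cell involves the vertex of $D_2$; in general the faces coned off $v_E$ correspond to components of intersections of $E$ with strict transforms of \emph{all} components whose closure meets $Z$, and the link of $v_E$ need not even embed in $\Delta^{cl}(X,D)$, since a stratum closure can meet $Z$ in several pieces. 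Second, and more seriously, your concluding assertion that ``the cone attached in the remaining case collapses directly onto its base'' is not a valid general principle: a cone does not collapse onto its base (the cone over two points is connected and collapses preserve connectivity), so ``attaching a collapsible cone'' is not by itself a simple homotopy equivalence. What actually makes the non-stratum case work --- and this is the content of Stepanov's lemma that your outline omits --- is that the link $L$ of $v_E$ is itself collapsible, for a geometric reason: when $Z$ is strictly contained in its minimal stratum $W$ (a component of $\bigcap_{i\in J}D_i$), the intersection $E\cap\bigcap_{i\in J}\widetilde D_i = \mathbb{P}(N_{Z/W})$ is nonempty and every face of $L$ can be coned toward it, exhibiting $L$ as a join with a nonempty simplex on $J$; a cone over a \emph{collapsible} complex does collapse onto its base, and this yields $\Delta^{cl}(X',D')\searrow\Delta^{cl}(X,D)$. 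Note this is exactly where the stratum/non-stratum dichotomy lives: if $Z=W$ then $\mathbb{P}(N_{Z/W})=\emptyset$, the $J$-simplex is not filled in, and one gets the stellar subdivision instead. Without identifying why $L$ is collapsible, the last step of your plan --- the one you yourself flag as ``where the genuine work lies'' --- does not go through.
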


Whatever combinatorial realization of the boundary complex we use, the structure of an algebraic stack, regarded here as a coequalizer of certain algebraic spaces in a presentation, translates into the requirement that this class of combinatorial objects is closed under the operation of taking colimits.  For the purposes of this paper, we will make use of the formalism of \textit{generalized $\Delta$-complexes} to define the boundary complex. Elements of these constructions are recalled in the Appendix below, we refer to \cite{ACP} and \cite{CGP} for a more comprehensive treatment. We shall denote generalized $\Delta$-complexes by the notation $\Delta^{gen}(X,D)$ and unordered $\Delta$-complexes by $\Delta^{un}(X,D)$. Unordered $\Delta$-complexes are sufficent to deal with simple normal crossing divisors, but the extra generality of generalized $\Delta$-complexes is required for normal crossing divisors.

\begin{definition}{\sc Boundary complex of a stack: }
\label{bcdef2}
Given a pair consisting of a Deligne-Mumford stack $X$ and a normal crossings divisor $D$ on $X$, we can take an \'etale surjection $Z \to X$ such that $Z_{D} = D \times _{X} Z$ is an snc divisor. Let $D_{Z \times _{X} Z}$ be the pullback of $D$ to $Z \times _{X} Z$. Then we have two maps $$\Delta^{un}(Z \times _{X} Z,D_{Z \times _{X} Z}) \double \Delta^{un}(D_{Z})$$ corresponding to the  projection morphisms. The \textit{boundary complex of the stack $X$ and divisor $D$} is a generalized $\Delta$-complex corresponding to the coequalizer of the two morphisms. We shall write $\Delta(X,D)$ for $\Delta^{gen}(X,D)$.
\end{definition}

It must be checked that the above definition is independent of the choice of presentation, a proof may be found in \cite[Section 5.2]{CGP}.

\subsection{Destackifcation}

Finally, we will make extensive use of a destackification result of Bergh which we recall, along with some preliminary definitions, below.

\begin{definition}{\sc Destackification: }
 \cite{Bergh} Let $X$ be a smooth algebraic stack over a field $k$, then a destackification is a proper birational morphism $f: X' \to X$ such that the coarse moduli space $(X')_{cs}$ is smooth
\end{definition}

\begin{definition}{\sc Smooth stacky blow-ups: }
\label{ssbu}
Following  \cite{Bergh}, we recall that if $(X,E)$ is a pair consisting of a smooth algebraic stack and $E$ an effective cartier divisor on $X$ with simple normal crossings, then a \textit{ smooth stacky blow-up} is either a root construction along a component of $E$ or the blow-up of a closed smooth substack \textit{intersecting $E$ with normal crossings}.
\end{definition}

In \cite{Bergh}, Bergh provides a functorial construction of destackification morphisms $f: X' \to X$. We state only a special case of his result:

\begin{theorem}
 \cite[Corollary 1.4]{Bergh} Let $X$ be a smooth Deligne-Mumford stack of finite type.  If $X$ has finite inertia, then there exists a smooth, stacky blow-up sequence 
$$(X_m, E_m) \to \cdots \to (X_0,E_0)=(X,E_0)$$
where $X_m \to X $ is a destackification. Moreover, the coarse map $X_m \to (X_m)_{cs}$ can be factored as a gerbe followed by a sequence of root constructions. The construction is functorial with respect to smooth, stabilizer preserving morphisms $X' \to X$.
\end{theorem}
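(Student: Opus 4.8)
The plan is to prove the statement by étale-local analysis together with an induction that strictly decreases a canonical measure of stackiness. Because we work in characteristic $0$, a smooth Deligne--Mumford stack $X$ with finite inertia is étale-locally isomorphic to a quotient $[V/G]$ with $V$ smooth affine and $G$ a finite group fixing a point, and the geometric stabilizers are precisely the groups $G$ occurring in such charts. The target condition, that the coarse space $(X_m)_{cs}$ be smooth, is equivalent to the stacky structure being \emph{toroidal} relative to the accumulated exceptional divisor $E_m$, i.e.\ to $X_m$ arising from a smooth algebraic space by a gerbe followed by iterated root constructions along the components of $E_m$. Everything must be assembled from the two permitted operations of Definition \ref{ssbu} so that the exceptional divisor remains simple normal crossings at every stage.

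First I would reduce to the case of abelian, indeed diagonalizable, stabilizers. The geometric stabilizer type defines a canonical locally closed stratification of $X$, and I would induct on the poset of conjugacy classes of stabilizer groups, at each step blowing up the smooth closed locus on which the stabilizer is a maximal non-abelian group, arranged to meet $E$ with normal crossings. Since the stratification by stabilizer type is intrinsic, the centers are canonical; this is the most delicate step, because one must check that after finitely many such blow-ups every geometric stabilizer has become abelian while the snc condition on the growing exceptional divisor is preserved.

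With abelian stabilizers in hand, the local models become quotients of smooth affines by diagonalizable groups, so the situation is governed by toric combinatorics. Here I would introduce a numerical invariant -- for instance the lexicographic datum recording the largest order of a stabilizer, the codimension of the locus on which it is attained, and the number of divisorial directions in which that stabilizer already acts -- and show that a single canonical stacky blow-up strictly decreases it: a root construction along the relevant component of $E$ when the stabilizer already acts through a divisor, and the blow-up of the smooth stacky stratum otherwise. Iterating drives the invariant down until all stabilizers act only through the components of $E$, at which point $X_m$ is a root stack over its coarse space along $E_m$ up to a gerbe, which is exactly the asserted factorization of the coarse map.

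Functoriality along smooth stabilizer-preserving morphisms $X' \to X$ would then follow formally, since every center chosen above is cut out by an intrinsic condition on stabilizer type and on position relative to $E$, and such data pull back along smooth stabilizer-preserving maps; hence the canonical sequence for $X$ restricts to the canonical sequence for $X'$. The main obstacle throughout is the first reduction to abelian stabilizers carried out \emph{simultaneously} with (i) preserving the simple normal crossings condition on the exceptional divisor and (ii) keeping all choices functorial. Controlling the interaction between newly created exceptional components and the pre-existing divisor $E$ under repeated blow-ups is what makes the bookkeeping genuinely hard, and is where I would expect to invest the most effort.
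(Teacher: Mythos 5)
First, a point of orientation: the paper does not prove this statement at all. It is Bergh's destackification theorem, quoted as background from \cite[Corollary 1.4]{Bergh} and used as a black box in the proofs of Theorems \ref{main1} and \ref{main2}. So your proposal has to be measured against Bergh's original argument (and, for non-abelian stabilizers as allowed in the statement here, against the later Bergh--Rydh extension), not against anything in this paper. Your outline does echo the broad shape of that work -- reduction to diagonalizable stabilizers, toric combinatorics, a strictly decreasing invariant, intrinsically defined centers to get functoriality -- but two load-bearing steps are wrong or missing, so it does not constitute a proof.

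Concretely: (1) Your opening claim that smoothness of $(X_m)_{cs}$ is \emph{equivalent} to $X_m$ being a gerbe followed by root constructions along $E_m$ is false. For the reflection action of $S_3$ on $\AA^2$ the coarse space $\AA^2/S_3 \cong \AA^2$ is smooth (Chevalley--Shephard--Todd), yet $[\AA^2/S_3]$ is not a gerbe followed by root stacks over it: the generic stabilizer is trivial, so the gerbe would be trivial, and root constructions only produce abelian stabilizers, contradicting the stabilizer $S_3$ at the origin. The factorization of the coarse map is strictly stronger than smoothness of the coarse space, and it is the stronger statement the theorem asserts. (2) More seriously, your abelianization step fails. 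The stabilizer of a point on the exceptional divisor of a blow-up is the stabilizer of the corresponding normal direction, which can be the whole group, and your procedure can loop forever: let $S_3$ act on $\AA^3 = V_{\mathrm{std}} \oplus V_{\mathrm{sgn}}$, the standard two-dimensional representation plus the sign representation. The maximal non-abelian stratum is the origin; blowing it up produces on the exceptional $\PP^2$ the point $[0:0:1]$, whose stabilizer is again all of $S_3$, and since $V_{\mathrm{std}} \otimes V_{\mathrm{sgn}} \cong V_{\mathrm{std}}$ the tangent representation at that point is again $V_{\mathrm{std}} \oplus V_{\mathrm{sgn}}$. Blowing up the new maximal non-abelian stratum reproduces the same local model verbatim, ad infinitum, and the presence of the accumulating snc divisor does not break the loop since your centers are chosen by stabilizer type alone. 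This is precisely why Bergh's theorem was first proved only for diagonalizable (abelian) stabilizers and why the general case required a genuinely different mechanism. Finally, the steps you do sketch in the abelian case -- the lexicographic invariant, its strict decrease under a canonical stacky blow-up, and the compatibility of all choices with smooth stabilizer-preserving pullback -- are exactly the content of Bergh's paper and are asserted rather than carried out.
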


\begin{remark}
Bergh's theorem cannot be applied to our situation naively as the open set $U$ in Theorem \ref{main1} may carry the structure of a stack. Our work below shows that we may extract and isolate just the operations that leave $U$ invariant.
\end{remark}

The role of the divisors $E$ in the statement above is a bookkeeping role analogous to keeping track of exceptional divisors in Hironaka's algorithm, and so by abuse of notation, we implicity regard $E_0$ as the `empty divisor' in the above formulation.

\subsection{Weak Factorization}

In their paper \cite{AT2}, Abramovich and Temkin prove a generalization of the original weak factorization theorem \cite{AKMW}. We refer to Abramovich-Temkin \cite{AT2} for the precise definition of a \textit{weak factorization} and the full statements of their results. Here, we simply recall one of their results in the precise form that we shall use

\begin{theorem}
 \cite[Theorem 1.4.1]{AT2} Assume $f: (X_1,D_1) \to (X_2,D_2)$ is a blow-up of Deligne-Mumford stacks in characteristic 0  such that $X_1 - D_1$ is mapped isomorphically onto $X_2 - D_2$. Then the morphism $f$ admits a weak factorization $$(X_1,D_1) = (V_0,D_0) \dashrightarrow V_1 \dashrightarrow \cdots \dashrightarrow (V_n,D_n) = (X_2,D_2) $$ where the associated morphisms are representable, trivial over $U = X_2 - D_2$, and are constructed by blowing up of smooth centers intersecting the $D_i$ (respectively $D_{i+1}$) with normal crossings.
\end{theorem}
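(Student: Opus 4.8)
The plan is to reduce the statement to a combinatorial factorization of fans and then geometrize it by a $\mathbb{G}_m$-cobordism, following the strategy of \cite{AKMW} and \cite{W-Cobordism} but carried out in the stacky toroidal setting. Since $f$ is an isomorphism over $U$, everything of interest is concentrated near the boundary, so the first step is to put $(X_1,D_1)$ and $(X_2,D_2)$ into toroidal (equivalently, logarithmically smooth) form and to make $f$ itself toroidal. I would do this using functorial embedded resolution of singularities in characteristic $0$: applied to the closure of the graph of $f$ and run equivariantly on an \'etale chart, it produces a toroidal modification in which $f$ is modeled \'etale-locally on a proper birational map of toric stacks associated to a subdivision of cone complexes. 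Functoriality is essential here -- it is what allows the local toric models to glue across an \'etale presentation $Z \to X_i$ and descend to the stacks, and it keeps the construction trivial over $U$.

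The combinatorial heart is the second step. A toroidal $f$ is encoded by a subdivision of a stacky cone complex, i.e.\ a fan together with lattice data recording the generic and special stabilizers. Under this dictionary, a representable blow-up of a smooth center meeting $D$ transversally corresponds to a \emph{smooth} star subdivision that refines cones without refining the ambient lattice; a lattice refinement would instead introduce a root construction and break representability. The key input is the combinatorial weak factorization theorem for fans of Morelli, Abramovich--Matsuki--Rashid and W\l odarczyk: passing through a common smooth refinement, the subdivision factors into a sequence of smooth star subdivisions and their inverses. I would run this while carrying the lattice data along, so that each elementary step remains a representable smooth blow-up or blow-down.

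The third step is geometrization. To realize the combinatorial zigzag as an honest sequence $(V_0,D_0) \dashrightarrow \cdots \dashrightarrow (V_n,D_n)$ of stacks, and to control representability globally rather than only in charts, I would construct a birational cobordism: a smooth Deligne-Mumford stack $B$ carrying a $\mathbb{G}_m$-action, trivial over $U$ and compatible with the boundary, whose GIT-type quotients at the two extreme linearizations recover $(X_1,D_1)$ and $(X_2,D_2)$ with $f$ the induced map. The finitely many wall-crossings in the space of stability conditions produce the intermediate $V_i$, and each wall-crossing is an elementary transformation that factors as a blow-up followed by a blow-down along smooth centers. Because the action fixes $U$ and preserves $D$, these centers automatically meet the boundary with normal crossings, and every map in the factorization is an isomorphism over $U$.

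The main obstacle is representability. Keeping every intermediate morphism representable forces all the subdivisions to be performed with respect to the fixed lattice of each cone, never refining it; verifying that the cobordism and its wall-crossings respect this constraint -- equivalently, that no step secretly introduces stacky structure of root-stack type -- is where the genuine work lies, and it is exactly the point that must be monitored when gluing the local toric models and descending along the \'etale presentation.
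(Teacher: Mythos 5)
This statement is not proved in the paper at all: it is recalled verbatim from Abramovich--Temkin (cited as [AT2, Theorem 1.4.1]) and used as a black box, so the only meaningful comparison is between your sketch and the actual proof in that reference, which follows the W\l odarczyk/AKMW cobordism strategy. Measured against that, your proposal contains a genuine gap at its first step. You claim that functorial embedded resolution of singularities, applied to the closure of the graph of $f$, ``produces a toroidal modification in which $f$ is modeled \'etale-locally on a proper birational map of toric stacks.'' Resolution of the graph closure only produces a smooth stack $Z$ with snc boundary and projective birational morphisms to $X_1$ and $X_2$; it does \emph{not} make those morphisms toroidal. Toroidalizing a birational morphism of smooth pairs is the toroidalization conjecture of Abramovich--Karu, known only in low dimensions (Cutkosky for threefolds) and open in general; it is emphatically not a corollary of resolution of singularities. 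If your step 1 were available, then step 2 (Morelli/AMR combinatorial factorization of the resulting subdivision of cone complexes) would already finish the proof, and your step 3 would be superfluous --- which signals that the argument's logical structure is inverted.

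The actual proof runs in the opposite order, precisely to avoid needing toroidalization of $f$: one first builds the birational cobordism $B$ with its $\mathbb{G}_m$-action (W\l odarczyk), which is only \emph{locally toric}, then torifies it by blowing up torific ideals to obtain a genuinely toroidal action, and only then applies the combinatorial factorization ($\pi$-desingularization of Morelli) to the fans of the GIT quotients. Two further points you present as automatic are in fact the hard content there: a wall-crossing between adjacent GIT quotients does not ``factor as a blow-up followed by a blow-down along smooth centers'' for free --- that is exactly what $\pi$-desingularization is needed for --- and the normal crossings position of the centers relative to the transformed boundary divisor requires canonical (functorial) resolution and careful divisor bookkeeping, not just $\mathbb{G}_m$-equivariance. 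Your instinct that representability is governed by forbidding lattice refinements (which would introduce root stacks) is sound and does reflect a real feature of the stacky argument, but as written the proposal rests on an unavailable toroidalization step and defers the genuine difficulties of the cobordism method to assertions.
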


\section{Groupoids in Algebraic Stacks}

In this section, we provide a definition for a groupoid in stacks together with the rudiments of $2$-topoi particular to our situation. We begin by recalling the case of groupoids in schemes

\begin{definition}
\label{gpddef1}
A \textit{classical} groupoid presentation in schemes consists of a septuple $(U,R, \pi_1: R \to U,\pi_2: R \to U,c: R \times_{\pi_1,U,\pi_2} R \to R,e:U\to R,i: R \to R)$ where the morphisms are subject a standard list of axioms for which we refer to \cite[Tag:  0230]{stacks}.
\end{definition}

The axioms contained in the reference are a simple categorification of the ordinary definition of a groupoid. Such an object does not immediately correspond to an algebraic stack, but it will admit an associated quotient under the assumption that $\pi_1$ and $\pi_2$ are smooth. 

To better understand the need for this categorical detour, we recall that a standard method of studying a Deligne-Mumford stack $\cX$ consists of replacing $\cX$ with a groupoid in schemes $(U,R,\pi_1,\pi_2, c,e,i)$ such that $[U/R]$ is isomorphic to $\cX$. Below, we shall sometimes abuse notation and write  $(U \double R,c)$  for our groupoids $(U,R,\pi_1,\pi_2, c,e,i)$ .  The gist of the idea is that statements proven on $U$ and $R$ in an equivariant way can then be descended to statements concerning $\cX$. This is fine, but a complication arises if instead we study a morphism $f: \cY \to \cX$: A groupoid presentation $(U,R,\pi_1,\pi_2, c,e,i)$ of $\cX$ will determine a groupoid presentation of $\cY$, but it will \textit{not} be a groupoid presentation \textit{in schemes} unless $f$ was already representable. 

It is quite possible that one could start with a groupoid presentation in schemes, apply some geometric procedure, and end up with a groupoid presentation in stacks. For example, it may be the case that $U$ and $R$ both carry the action of some group $G$ and the structure morphisms are equivariant with respect to this action, then one may find themselves tempted to replace $U$ with $R$ with their stack quotients $[U/G]$ and $[R/G]$.  It is therefore desirable to sketch the foundations of groupid presentations in Deligne-Mumford stacks and we believe that this most naturally done in the language of $\infty$-categories.

These generalized groupoids of Deligne-Mumford stacks are defined in Definition \ref{repgrpdef}. In Proposition \ref{repquotprop}, we verify the existence of algebraic quotients in a special case. In Proposition \ref{stablepullback}, we verify the crucial fact that the pullback of an inertia stable (Definition \ref{iertstab}) groupoid is still inertia stable. The latter fact is used to descend the operations of Bergh's destackification in the next section.

In the case of schemes, it is known that $[U/R]$ may be regarded as the $2$-coequalizer of $U \double R$ \cite[Tag:  044U]{stacks}. On the other hand, it is also known that $R$ may be regarded as the kernel pair associated to the morphism $U \to [U/R]$  \cite[Tag: 04M9]{stacks}. This entire situation may be expressed succinctly: The morphism  $U \to [U/R]$ is an \textit{effective epimorphism}. 

\begin{definition}
In an ordinary category $C$ with all small limits, an effective epimorphism $f:c \to d$ is a morphism such that $(c \times_{d} c) \double c \to d$ is a coequalizer diagram.
\end{definition}

In topos theory, it is known that every congruence has an effective quotient. This notion and result admits a generalization to $(n,1)$-topoi for $1 \leq n \leq \infty$. We assume some familiarity with the theory of $\infty$-categories, but we recall the definitions most relevant to us below. 

\begin{definition}
A morphism $f: X \to Y$ of topological spaces is $n$-truncated if the homotopy groups $\pi_i (F,x)$ of the homotopy fiber $F$  of $f$ vanish for all $i > n$ and all base points $x$.
\end{definition}

\begin{definition}
\cite[Definition 6.1.2.2]{HTT}  Let $\cX$ be an  $\infty$-category. A simplicial object of $\cX$ is a map of $\infty$-categories  $:U_{\bullet}: N(\Delta)^{op} \to \cX$. 
\end{definition}

Recall that $\Delta$ is the category with objects $[n]$ and morphisms $[n] \to [m]$ corresponding to all set functions $\{0,\ldots,n\}\to \{0,\ldots,m\}$. Fixing a simplicial object $U_{\bullet}$ and following Lurie \cite[Notation 6.1.2.5.]{HTT}: If $K$ is a simplicial set, then we take $\Delta_{\slash K}$ to be the category of pairs $(J,\eta)$ where $J \in \Delta$ and $\eta \in Hom_{SSet}(\Delta^J,K)$. The definition of $\Delta^J$ may be found in \cite[Definition 1.1.1.5]{HTT} and it is only a slight generalization of the simplicial set associated with the simplex $\Delta^n$.

\begin{definition}
\label{infinitygroupoids}
\cite[Definition 6.1.2.6]{HTT} Let $\cX$ be an  $\infty$-category. A simplicial object $U_{\bullet}$  in $\cX$ is said to be a groupoid object if, for every $n \geq 2$ and every $0 \leq i \leq n$, the induced map $\cX_{\slash U[\Delta^n]} \to \cX_{\slash U[\Lambda_i^n]}$ is a weak equivalence

\end{definition}

To motivate these notions, we relate the notion of an $\infty$-groupoid with the more classical concept.

\begin{proposition}
\cite[Remarks 2.3]{PRID}\cite{glenn}There is a one to one correspondence between groupoid objects in $N(Sch(R))$and groupoid presentations defined over $N(Sch(R))$.
\end{proposition}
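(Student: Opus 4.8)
The plan is to unwind Lurie's horn-filling definition of a groupoid object (Definition \ref{infinitygroupoids}) in the special case where the ambient $\infty$-category is the nerve of an ordinary category, and to match the resulting data against the classical septuple of Definition \ref{gpddef1}. The essential point is that $N(Sch(R))$ is $1$-truncated: its mapping spaces are discrete, so a morphism is a weak equivalence exactly when it is an isomorphism, and the slice $\infty$-categories $\cX_{\slash U[\Delta^n]}$ compute honest fibre products of schemes. The first step is therefore to record Lurie's reformulation of the groupoid condition: a simplicial object $U_\bullet$ is a groupoid object precisely when it satisfies the Segal condition --- each structural map
\[
U_n \longrightarrow U_1 \times_{U_0} U_1 \times_{U_0} \cdots \times_{U_0} U_1 \quad (n \text{ factors})
\]
is an equivalence --- together with an invertibility (inner/outer horn) condition. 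Specialized to $N(Sch(R))$, ``equivalence'' becomes ``isomorphism of $R$-schemes''.

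Granting this, the second step is to extract the septuple from a groupoid object $U_\bullet$. I would set $U := U_0$ and $R := U_1$, take $\pi_1 := d_1$ and $\pi_2 := d_0$ to be the two face maps $U_1 \to U_0$, and use the Segal isomorphism $U_2 \xrightarrow{\sim} R \times_U R$ together with the remaining face $d_1 \colon U_2 \to U_1$ to define the composition $c$. The identity section is $e := s_0 \colon U_0 \to U_1$, and the inversion $i \colon R \to R$ is produced by the invertibility part of the groupoid condition (equivalently, from an outer-horn filler being invertible). The groupoid axioms --- associativity, unit, inverse --- then follow mechanically from the simplicial identities among the $d_j$ and $s_j$ in low simplicial degree, since the Segal condition guarantees that everything in degrees $\geq 2$ is determined by degrees $0$ and $1$.

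The third step is the reverse construction: given a classical groupoid $(U,R,\pi_1,\pi_2,c,e,i)$ in $Sch(R)$, I would form its nerve, the simplicial scheme with $U_n := R \times_U \cdots \times_U R$ ($n$ factors, using $\pi_1,\pi_2$ for the fibre products), inner faces given by composition via $c$, outer faces by $\pi_1,\pi_2$, and degeneracies by inserting identities via $e$. The Segal condition holds by construction, and the invertibility condition is exactly the assertion that $i$ provides two-sided inverses; hence this simplicial object is a groupoid object in $N(Sch(R))$. Checking that the two passages are mutually inverse is then a bookkeeping matter and yields the claimed bijection; this is the content of \cite[Remarks 2.3]{PRID} and of Glenn's treatment \cite{glenn} of Kan conditions for simplicial objects in exact categories.

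The step I expect to be the real obstacle is the first one --- verifying that Lurie's condition, phrased via weak equivalences of slice $\infty$-categories, collapses cleanly to the Segal-plus-invertibility condition over a $1$-category. Once the ambient category is known to be $1$-truncated this is essentially formal, but making the identification of $\cX_{\slash U[\Delta^n]}$ with the appropriate category of fibre products precise, and confirming that the outer-horn maps encode genuine invertibility rather than mere composability, requires some care; this is precisely where the cited references do the work.
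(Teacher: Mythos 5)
Your proposal is correct and follows essentially the same route as the paper: both exploit that $N(Sch(R))$ is the nerve of a $1$-category (so weak equivalences are isomorphisms and the slice conditions reduce to Segal-type fibre-product isomorphisms), extract $(U_0,U_1,\partial_0,\partial_1,c,e,i)$ from low simplicial degrees --- with your outer-horn construction of $i$ matching the paper's explicit composite through $(\partial_1,\partial_2)^{-1}$ --- and recover the full simplicial object in the reverse direction via the Segal/nerve identification $U_n \cong U_1 \times_{U_0}\cdots\times_{U_0} U_1$, deferring the axiom-checking to \cite{PRID} and \cite{glenn}.
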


\begin{proof}
Since $Sch(R)$ is a $1$-category, all categorical constructions are the classical ones, the term weak equivalence in Definition \ref{infinitygroupoids} simply means isomorphism. Now, we essentially just follow the references cited: Clearly we have natural projection maps $\partial_i: U_1 \to U_0)$ where $i=0$ or $i=1$. The natural map $U_2 \to (U_1 \times_{\partial_0,U_0,\partial_1} U_1)$ is an isomorphism, and we denote by $c$ the composition 
 $$U_1 \times_{\partial_0,U_0,\partial_1} U_1 \to U_2 \to U_1$$
where the left map is just the inverse of $(\partial_2, \partial_0)$ and the right map is $\partial_1$. We define an inverse map $i: U_1 \to U_1$ according to the composition
$$U_1 \to (U_0 \times U_1) \to (U_1 \times_{\partial_1,U_0,\partial_1} U_1) \to U_2 \to U_1$$
where the first morphism is $(\partial_0,id)$, the second morphism is $(s_0,id)$, the third $(\partial_1,\partial_2)^{-1}$, and the fourth is $\partial_0$. A third morphism $e$ corresponding to the identity may be constructed either directly or from $i$ and $c$.

It can be shown that the data $(U_0, U_1, \partial_0, \partial_1, c,)$ satisfies the axioms of a groupoid in schemes \cite{PRID}[Lemma 2.12 and Remarks 2.13]. On the other hand, the definition of a groupoid object ensures that there is an isomorphism $(U_1 \times_{\partial_0,U_0\partial_1} \ldots \times_{\partial_0,U_0\partial_1} U_1) \to U_n$ where the fiber product involves $n$ copies of $U_1$, thus the entire $\infty$-groupoid object may be recovered from the data of $(U_0, U_1, \partial_0, \partial_1, c)$.
\end{proof}

\begin{definition}
\cite[Definition 6.4.3.1]{HTT} Let $\cX$ be an  $\infty$-category and $U_{\bullet}$ a groupoid object of $\cX$, then $U_{\bullet}$ is \textit{n-efficient} if the induced map of morphism spaces $Map_{\cX}(E,U_1) \to Map_{\cX}(E,U_0 \times U_0)$ is $(n-2)$ truncated for every object $E$.
\end{definition}

Stacks are also known in the literature as $(2,1)$-sheaves on the \'etale \cite{Lurie2}[1.2.5] and it is well known that such sheaves may be organized into a $2$-category carrying the structure of a $(2,1)$-topos. From the theory of $(2,1)$-topoi, we need only the following result \cite[Theorem 6.4.1.5]{HTT}:

\begin{proposition}
\label{effgrp}
 \cite[Theorem 6.4.1.5]{HTT} In any $(n,1)$-topos, every n-efficient groupoid object $U_{\bullet}$ is effective and thus admits a colimit $U_{-1} $ such that the natural map $U_1 \to U_0 \times_{U_{-1} } U_0$ is an equivalence.
\end{proposition}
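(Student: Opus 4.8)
The plan is to deduce this from the corresponding statement for $\infty$-topoi, where \emph{every} groupoid object is effective (one of Lurie's Giraud axioms, \cite[Theorem 6.1.0.6]{HTT}), and to use the $n$-efficiency hypothesis solely to guarantee that the resulting colimit lands back inside the $(n,1)$-topos. Concretely, I would realize the given $(n,1)$-topos $\cX$ as the full subcategory $\tau_{\leq n-1}\cY$ of $(n-1)$-truncated objects in an ambient $\infty$-topos $\cY$. One then forms the colimit $U_{-1} = |U_{\bullet}|$ of the groupoid object in $\cY$. By effectivity in the $\infty$-topos, the canonical map $U_0 \to U_{-1}$ is an effective epimorphism and $U_{\bullet}$ is recovered as its \v{C}ech nerve; in particular $U_1 \simeq U_0 \times_{U_{-1}} U_0$.

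The main point, and the place where $n$-efficiency enters, is to show that $U_{-1}$ is $(n-1)$-truncated, so that it actually lies in $\cX$. Here I would observe that the equivalence $U_1 \simeq U_0 \times_{U_{-1}} U_0$ exhibits the map $U_1 \to U_0 \times U_0$ as the pullback of the diagonal $\delta\colon U_{-1} \to U_{-1} \times U_{-1}$ along $U_0 \times U_0 \to U_{-1} \times U_{-1}$. The $n$-efficiency hypothesis says precisely that $U_1 \to U_0 \times U_0$ is $(n-2)$-truncated. Since $U_0 \to U_{-1}$ is an effective epimorphism, so is $U_0 \times U_0 \to U_{-1} \times U_{-1}$, and because truncatedness of a morphism can be tested after base change along an effective epimorphism in an $\infty$-topos, I can descend and conclude that $\delta$ is itself $(n-2)$-truncated. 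By the standard characterization of truncatedness via the diagonal, this is equivalent to $U_{-1}$ being $(n-1)$-truncated, hence an object of $\cX = \tau_{\leq n-1}\cY$.

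Finally, I would check that $U_{-1}$ serves as the colimit in $\cX$ and not merely in $\cY$: since $\cX$ is the localization onto $(n-1)$-truncated objects and $U_{-1}$ is already $(n-1)$-truncated, the colimit computed in $\cY$ also computes the colimit in $\cX$, and the equivalence $U_1 \simeq U_0 \times_{U_{-1}} U_0$ persists. This yields effectivity in the $(n,1)$-topos, which is the assertion.

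I expect the main obstacle to be the bookkeeping of truncation indices, matching the $(n-2)$-truncatedness appearing in the definition of $n$-efficiency with the $(n-1)$-truncatedness of the object $U_{-1}$ via the diagonal, together with justifying the descent of truncatedness along the effective epimorphism $U_0 \times U_0 \to U_{-1} \times U_{-1}$. It is exactly the topos-theoretic locality of truncated morphisms that makes this descent legitimate, and verifying that this locality is available (rather than only the weaker statement for objects) is the step I would be most careful about.
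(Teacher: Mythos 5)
The paper does not actually prove Proposition \ref{effgrp}: the statement is imported verbatim from Lurie, being part of the $n$-categorical Giraud theorem \cite[Theorem 6.4.1.5]{HTT}, so there is no internal argument to compare yours against. Your proof is correct, and it is essentially the argument underlying Lurie's own: realize the $(n,1)$-topos as the $(n-1)$-truncated objects of an $\infty$-topos, invoke unconditional effectivity of groupoid objects there \cite[Theorem 6.1.0.6]{HTT}, identify $U_1 \to U_0 \times U_0$ with the base change of the diagonal of $U_{-1}$ along the effective epimorphism $U_0 \times U_0 \to U_{-1} \times U_{-1}$, and descend $(n-2)$-truncatedness; the descent step you flagged is indeed available, namely \cite[Proposition 6.2.3.17]{HTT} (truncatedness of morphisms is local on the target), which is the very result this paper uses for the same purpose in the proof of Proposition \ref{repquotprop}, and the diagonal criterion you invoke is \cite[Lemma 5.5.6.15]{HTT}. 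Two points in your sketch deserve explicit justification rather than being folded into the setup. First, that every $(n,1)$-topos arises as $\tau_{\leq n-1}\cY$ for some $\infty$-topos $\cY$ is itself a substantive theorem (the theory of $n$-localic $\infty$-topoi, \cite[Section 6.4.5]{HTT}); it is proved there without appeal to the Giraud characterization, so your argument is not circular, but it is an input on the same footing as the $\infty$-categorical Giraud axioms and should be cited. Second, $n$-efficiency as defined tests mapping spaces only against objects $E$ of the $(n,1)$-topos $\cX$, whereas your descent argument needs $U_1 \to U_0 \times U_0$ to be $(n-2)$-truncated as a morphism of the ambient $\infty$-topos $\cY$; this does follow, because $U_1$ and $U_0 \times U_0$ are $(n-1)$-truncated, so maps out of an arbitrary $E \in \cY$ factor through the truncation $\tau_{\leq n-1}E \in \cX$, but the step should be stated. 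With these two glosses filled in, the truncation bookkeeping and the reflective-subcategory formalities at the end (a colimit computed in $\cY$ that happens to be $(n-1)$-truncated is also the colimit in $\cX$, and finite limits of truncated objects agree in $\cX$ and $\cY$) are all correct.
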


\begin{remark}

As is described in \cite[1.2.5 - Deligne-Mumford Stacks as Functors]{Lurie2}, the $2$-category of Deligne-Mumford stacks over a ring $R$ embeds fully and faithfully into the $\infty$-category of functors $Sch(R) \to \tau_{\leq 1}(Top)$ where $\tau_{\leq 1}(Top)$ is the $\infty$-category of spaces with vanishing homotopy groups $\pi_{i}(X)$ for $i>2$. After applying an appropriate sheaf condition and a local trivality condition, we recover the $(2,1)$-category of Deligne-Mumford stacks. In more classical language, we see this construction is essentially the same as regarding a Deligne-Mumford stack as a  psuedofunctor. 

One also observes that the $2$-efficient condition on a groupoid $U_{\bullet}$ can be understood in this situation as follows: Fix a morphism $Spec(S) \to Spec(R)$ and let $h_S$ be the associated functor of points, then $U_{\bullet}$ is $2$-efficent exactly when the morphism $Map_{\cX}(h_S,U_1) \to Map_{\cX}(h_S,U_0 \times U_0)$ is 0-truncated. for every such $Spec(S)$. Chasing definitions, this is equivalent to the morphism of groupoids $U_1(S) \to U_0(S) \times U_0(S)$ having a homotopically trivial fiber. This will be true only when $Iso_{U_1/U_0 \times U_0}(x,x)$ is trivial for each $x \in U_1(S)$ and follows from this that the morphism $U_1 \to U_0 \times U_0$ must be representable. Working backwards, we see that representability of $U_1 \to U_0 \times U_0$ is a sufficent condition or a morphism to be $2$-efficent.
\end{remark}

With these considerations in mind, we are led to the following definition \cite{km1} \cite[Tag: 043T]{stacks}:

\begin{definition}
\label{repgrpdef}
A \textit{representable groupoid in Deligne-Mumford stacks} over a scheme $S$  is groupoid object $U_{\bullet}$ in $N(DM(S))$ with representable projection morphisms where $DM(S)$ is  the $(2,1)$-category of Deligne-Mumford stacks.
\end{definition}

Note that Definition \ref{effgrp} implies that effective groupoids in algebraic stacks have associated quotient stacks $[U/R]$. The next proposition verifies the existence of a quotient stacks for representable groupoid in Deligne-Mumford stacks, and it also allows us to replace a representable groupoid in Deligne-Mumford stacks with a groupoid in schemes as defined in Definition \ref{gpddef1}.

\begin{proposition}
\label{repquotprop}
Assume $U_{\bullet}$ is a representable groupoid in Deligne-Mumford stacks over $S$, then  $U_{\bullet}$ is $2$-efficient and thus effective. If the projections $\pi_i: U_1 \to U_0$ are smooth, then the associated quotient stack is algebraic and we may regard it as the quotient of a classical groupoid in schemes $(V,R_V,\pi'_1,\pi'_2, c',e',i')$.
\end{proposition}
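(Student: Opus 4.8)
The plan is to handle the two assertions separately: $2$-efficiency and effectiveness, which use only representability of the projections, and then algebraicity together with the passage to a groupoid in schemes, which use the smoothness hypothesis.

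First I would read off $2$-efficiency directly from the definition of a representable groupoid. By Definition \ref{repgrpdef} the projections $\pi_i$ are representable, so the combined morphism $(\pi_1,\pi_2)\colon U_1\to U_0\times U_0$ is representable as well. As recorded in the remark preceding this proposition, representability of $U_1\to U_0\times U_0$ forces each fiber of $Map_\cX(h_S,U_1)\to Map_\cX(h_S,U_0\times U_0)$ to be homotopically trivial, i.e. makes this map $0$-truncated for every $h_S$, which is exactly the $2$-efficiency condition. Since the $(2,1)$-category of Deligne--Mumford stacks over $S$ is a $(2,1)$-topos, Proposition \ref{effgrp} then applies with $n=2$ and gives effectiveness: the colimit $\cX:=U_{-1}$ exists and the canonical map $U_1\to U_0\times_{\cX}U_0$ is an equivalence. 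Note that this step does not yet use smoothness.

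Next I would build a presentation at the level of schemes and deduce algebraicity from it simultaneously. Choosing a scheme étale atlas $V\to U_0$ (available because $U_0$ is Deligne--Mumford), I would set $R_V:=V\times_{U_0}U_1\times_{U_0}V$, which by the equivalence $U_1\simeq U_0\times_\cX U_0$ is canonically identified with $V\times_\cX V=(V\times V)\times_{U_0\times U_0}U_1$. Since $(\pi_1,\pi_2)\colon U_1\to U_0\times U_0$ is representable and $V\times V$ is a scheme, $R_V$ is an algebraic space; the two projections $R_V\to V$ are smooth, inherited by base change from the smooth $\pi_i$ together with the étale map $V\to U_0$. The composition, unit, and inverse of $U_\bullet$ transport to structure maps $c',e',i'$ on $(V,R_V)$, yielding a smooth groupoid whose quotient is Morita equivalent to the presentation $U_0\to\cX$, so that $[V/R_V]\simeq\cX$. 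Invoking the standard criterion that the quotient of a smooth groupoid in algebraic spaces is an algebraic stack (\cite[Tag: 043T]{stacks}), with $V\times_\cX V$ controlling the representable diagonal, I conclude that $\cX$ is algebraic.

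The step I expect to require the most care is guaranteeing that $R_V$ is genuinely a \emph{scheme} rather than merely an algebraic space smooth over $V$: the fiber product $R_V=(V\times V)\times_{U_0\times U_0}U_1$ is a priori only algebraic, so the groupoid I have produced is, strictly speaking, a groupoid in algebraic spaces. I would resolve this, following \cite{km1} and \cite[Tag: 043T]{stacks}, by refining $V$ along a suitable étale cover, using that every algebraic space is étale-locally a scheme and that such a refinement can be carried out compatibly with the structure maps $\pi'_1,\pi'_2,c',e',i'$; verifying that the groupoid axioms descend to the refined data $(V,R_V,\pi'_1,\pi'_2,c',e',i')$ is then a routine diagram chase. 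This bookkeeping, rather than any conceptual difficulty, is where I expect the real work of the final assertion to lie.
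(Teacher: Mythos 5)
Your main line coincides with the paper's own proof: both arguments choose an \'etale atlas $V \to U_0$ by a scheme, form the relations $R_V$ as the pullback of $U_1 \to U_0 \times U_0$ along $V \times V \to U_0 \times U_0$, deduce $2$-efficiency from representability of the projections, obtain effectiveness from Proposition \ref{effgrp}, and identify the resulting quotient with the quotient of the groupoid $(V,R_V,\pi'_1,\pi'_2,c',e',i')$ --- the paper carries out this last identification by gluing the cartesian squares of diagram \eqref{pullback} and composing effective epimorphisms, which is the same content as your appeal to Morita equivalence of the two presentations. The one mechanical difference in this part is that the paper proves $0$-truncatedness of $U_1 \to U_0 \times U_0$ by descending it from the scheme level along the effective epimorphism $V\times V \to U_0 \times U_0$, citing \cite[Proposition 6.2.3.17]{HTT}, whereas you read it off from the remark preceding the proposition; both are legitimate.

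The genuine problem is your final step, the upgrade of $R_V$ from an algebraic space to a scheme. Refining the atlas by an \'etale cover $V' \to V$ does not help: the refined relations are $R_{V'} = (V'\times V')\times_{V\times V} R_V$, which is merely \'etale over $R_V$ by a cover of this constrained product form, and nothing in your argument shows that covers of this form can schematize a non-schematic $R_V$ --- the failure of $R_V$ to be a scheme lives over $V \times V$ and is not repaired by shrinking $V$; the alternative of covering $R_V$ itself by a scheme destroys the groupoid structure. Note also that in the stated generality (smooth, not \'etale, projections) the map $U_1 \to U_0\times U_0$ need not be locally quasi-finite, so one cannot invoke Knutson's criterion (a separated, locally quasi-finite algebraic space over a scheme is a scheme) without further hypotheses; that criterion is the standard repair in the \'etale case actually used later in the paper. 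The paper itself sidesteps the issue by reading representability schematically: in its proof, $g\colon R_V \to V\times V$ is declared to be ``a representable morphism of schemes,'' so the base change of $(\pi_1,\pi_2)$ to the scheme $V\times V$ is a scheme with no further argument. So either adopt that convention in Definition \ref{repgrpdef}, in which case your worry dissolves and no refinement is needed, or add a quasi-finiteness/separatedness hypothesis and apply Knutson; the refinement strategy as written is the one step of your proposal that would fail.
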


\begin{proof} 
Choose an \'etale surjection from a scheme $V$ to $U_0$. This in turn induces an \'etale surjection $f: (V \times V) \to (U_0 \times U_0)$. In the $(2,1)$-topos $DM(S)$, the map $f$ is an effective epimorphism \cite[Tag:  044U]{stacks}. Denote by $g: R_{V\times V} \to V\times V$ the pullback of $U_1 \to U_0\times U_0$, then we obtain a groupoid in schemes $(V,R_V,\pi'_1,\pi'_2, c',e',i')$. The fact that the morphisms $\pi_i$ are representable imply that $g$ is a representable morphism of schemes. Thus the map $g$ is $0$-truncated and it follows from \cite[Proposition 6.2.3.17]{HTT} that the original map $U_1 \to U_0\times U_0$  is also $0$-truncated and the groupoid $(V,R_V,\pi'_1,\pi'_2, c',e',i')$ is effective.

Consequently, we have a quotient stack $[U/R]$. By gluing the four small cartesian squares in the diagram below in pairs, and then gluing the pairs, we see that that the big cartesian square is cartesian. This, together with the fact that the composition the effective epimorphisms $V \to U_0$ with the effective epimorphism $U_0 \to [U/R]$  is an effective epimorphism \cite[Corollary 7.2.1.15]{HTT}, shows that $[U/R]$ is in fact the quotient of  $(V,R_V,\pi'_1,\pi'_2, c',e',i')$ and is thus an algebraic stack. 
\begin{equation}
\label{pullback}
\begin{tikzcd}[row sep=scriptsize, column sep=scriptsize]
R_V  \arrow[r] \arrow[d]&V \times_{\pi_{0}} R  \arrow[r] \arrow[d]& V \arrow[d]\\
V \times_{\pi_{1}} R  \arrow[r] \arrow[d]&R  \arrow[r, "\pi_{0}"] \arrow[d, "\pi_{1}"]& U \arrow[d, "t"]\\
V  \arrow[r]&U  \arrow[r, "s"]&  (U/R)
\end{tikzcd}
\end{equation}
\end{proof}

From the argument above, we also obtain the corollary:

\begin{corollary}
\label{schpres}
Suppose we have a representable \'etale surjection of smooth Deligne-Mumford stacks $U \to X$. From this, we obtain a classical presentation $(U,R,\pi_1,\pi_2, c,e,i)$ of $X$ as a groupoid in algebraic stacks. If we also have a smooth scheme $V$ and an \'etale surjection morphism $V \to U$ giving rise to a second presentation $(V,R_V,\pi'_1,\pi'_2, c',e',i')$ of $X$ in schemes over $S$, then there natural morphism of groupoid presentations $(V,R_V,\pi'_1,\pi'_2, c',e',i') \to (U,R,\pi_1,\pi_2, c,e,i)$.
\end{corollary}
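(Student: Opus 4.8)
The plan is to read off the desired morphism of presentations directly from the pullback square (\ref{pullback}) produced in the proof of Proposition \ref{repquotprop}. Since the presentation of $X$ attached to $U \to X$ is the banal (relation) groupoid, we may identify $R = U \times_X U$ with $\pi_1,\pi_2$ the source and target projections, and likewise $R_V = V \times_X V$, the \'etale surjection $V \to X$ being the composite of $V \to U$ with $U \to X$. Diagram (\ref{pullback}) exhibits $R_V$ as the iterated fibre product of $R$ with $V$ along the source and target maps, i.e. the arrows of $R$ whose endpoints have been lifted along $p \colon V \to U$, so the projection onto the $R$-factor supplies a canonical morphism $P \colon R_V \to R$. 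I would then propose the pair $(p,P)$ as the morphism of groupoid presentations and check that the groupoid structure is respected.

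Compatibility with source and target, namely $\pi_j \circ P = p \circ \pi'_j$ for $j=1,2$, is immediate: it is precisely the commutativity of the relevant faces of (\ref{pullback}) and is built into the description of $R_V$ as a fibre product over $U$. For the unit and inverse, $e$ and $e'$ are the diagonals of $R = U \times_X U$ and $R_V = V \times_X V$, while $i$ and $i'$ swap the two factors; since $P$ is induced by applying $p$ to each factor, the identities $P \circ e' = e \circ p$ and $P \circ i' = i \circ P$ follow from the universal property of these fibre products. The one axiom requiring genuine checking is compatibility with composition: here $c$ sends a composable pair $((a,b),(b,c))$ of $R$ to its outer pair $(a,c)$, and the same holds for $c'$. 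A map into $U \times_X U$ is determined by its two $U$-components together with the $2$-morphism identifying their images in $X$, and both $P \circ c'$ and $c \circ (P \times P)$ (the latter using that $P$ carries composable pairs to composable pairs) carry the same outer components and the same $2$-cell, so they agree.

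The point that needs care, and the only real obstacle, is that all of this takes place in the $(2,1)$-category $DM(S)$, where the fibre products $U \times_X U$ and $V \times_X V$ and the structure maps built from them are defined only up to canonical $2$-isomorphism. Accordingly the ``equalities'' above are coherent $2$-isomorphisms, and the morphism of presentations is to be understood as a morphism of groupoid objects in $N(DM(S))$ in the sense of Definition \ref{infinitygroupoids}. The requisite coherences are all supplied by the universal properties of the fibre products; because the source groupoid $(V,R_V,\pi'_1,\pi'_2,c',e',i')$ is a strict groupoid in schemes whereas the target $(U,R,\pi_1,\pi_2,c,e,i)$ is only $2$-categorical, the bookkeeping of these $2$-cells is the substance of the verification, while the underlying $1$-categorical data is just the evident pair of projections $p$ and $P$.
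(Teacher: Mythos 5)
Your proposal is correct and matches the paper's intent: the paper offers no separate argument for Corollary \ref{schpres} beyond the remark that it follows ``from the argument above,'' i.e.\ from the construction in the proof of Proposition \ref{repquotprop}, where $R_V$ is built as the pullback of $R \to U \times U$ along $V \times V \to U \times U$ (diagram (\ref{pullback})), so the projection to the $R$-factor together with $p\colon V \to U$ is exactly the asserted morphism of presentations. Your additional verifications of compatibility with $c$, $e$, $i$, and your remark that these identities hold as coherent $2$-isomorphisms in $DM(S)$, simply make explicit what the paper leaves implicit.
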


For the purposes of this paper, we will place special emphasis on a particular class of presentations.

\begin{definition}
\label{iertstab}
Let $U_{\bullet}$ be a groupoid in Deligne-Mumford stacks over $S$. If the projection morphisms $\partial_i: U_k \to U_{k-1}$ are all inertia preserving morphisms, then we shall say the groupoid is \textit{inertia stable}. 
\end{definition}

\begin{examples}
\label{stableexample}
The groupoid presentation of Deligne-Mumford stack $X$ induced by a representable \'etale surjection $U_{-1} \to X$ from a scheme $U_{-1} $ is always inertia stable since each $U_i$ is a scheme.
\end{examples}

We also note that inertia stable groupoids have representable projections and thus have quotient stacks by Proposition \ref{repquotprop}. Our next goal is to show that groupoid presentations are pullback functorial with respect to a morphisms of quotients.

\begin{proposition}  Let $U_{\bullet}$ be a smooth representable groupoid in Deligne-Mumford stacks over $S$ with algebraic quotient $X$. If $Y$ is a Deligne-Mumford stack and $f: Y\to X$ is a morphism of Deligne-Mumford stacks, then $U_{\bullet}\times_{X} Y$  is a groupoid object with quotient $Y$. If $U_{\bullet}$  is inertia stable, then $U_{\bullet}\times_{X} Y$ is also inertia stable. 
\label{stablepullback}
\end{proposition}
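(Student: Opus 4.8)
The plan is to reduce both assertions to the explicit \v{C}ech-nerve description of $U_\bullet$ and then exploit the fact that $DM(S)$ is a $(2,1)$-topos, in which colimits are universal. By Proposition \ref{repquotprop} the groupoid $U_\bullet$ is effective, so it is equivalent to the \v{C}ech nerve of the atlas $p \colon U_0 \to X$; in particular $U_n \simeq U_0 \times_X \cdots \times_X U_0$ ($n+1$ factors), the face and degeneracy maps are the evident projections and diagonals, and $X \simeq \operatorname{colim} U_\bullet$. Writing $V_\bullet \defeq U_\bullet \times_X Y$ for the levelwise pullback along $f$, a direct manipulation of fibre products shows $V_n \simeq U_n \times_X Y \simeq (U_0\times_X Y)\times_Y \cdots \times_Y (U_0 \times_X Y)$, compatibly with all the simplicial structure maps.

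For the first assertion I would argue as follows. Universality of colimits in the $(2,1)$-topos $DM(S)$ (the descent axiom, \cite[Theorem 6.1.0.6]{HTT}) says that the base-change functor $(-)\times_X Y$ preserves colimits; applied to the colimit cone $U_\bullet \to X$ it identifies $V_\bullet$ with the \v{C}ech nerve of $q \colon U_0\times_X Y \to Y$ and gives $\operatorname{colim} V_\bullet \simeq (\operatorname{colim} U_\bullet)\times_X Y \simeq X \times_X Y \simeq Y$. Since every \v{C}ech nerve is a groupoid object \cite[Proposition 6.1.2.11]{HTT}, $V_\bullet$ is a groupoid object; and because $p$ is an effective epimorphism and these are stable under base change \cite[Proposition 6.2.3.15]{HTT}, $q$ is an effective epimorphism, so the colimit of its \v{C}ech nerve really is $Y$. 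This proves that $V_\bullet = U_\bullet \times_X Y$ is a groupoid object with quotient $Y$.

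For the inertia-stability assertion the key is a base-change lemma: \emph{if $g\colon A\to B$ is inertia preserving and $C \to B$ is arbitrary, then $g'\colon A\times_B C \to C$ is inertia preserving.} I would prove this from the compatibility of the inertia stack with $2$-fibre products, $I_{A\times_B C}\simeq I_A\times_{I_B}I_C$ (an automorphism of a point of $A\times_B C$ is precisely a compatible pair of automorphisms of its images in $A$ and $C$). Interpreting ``$g$ inertia preserving'' as the cartesianness of the square with edges $I_A \to I_B$ and $A \to B$, i.e.\ $I_A \simeq A\times_B I_B$, one computes
\[ I_{A\times_B C} \simeq I_A\times_{I_B} I_C \simeq (A\times_B I_B)\times_{I_B} I_C \simeq A\times_B I_C \simeq (A\times_B C)\times_C I_C, \]
which is exactly the statement that $g'$ is inertia preserving. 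Now each face map $\partial_i^V\colon V_k\to V_{k-1}$ is the base change of $\partial_i^U\colon U_k\to U_{k-1}$ along the projection $U_{k-1}\times_X Y \to U_{k-1}$ (here one uses that the $\partial_i^U$ are morphisms over $X$, so that $U_k\times_X Y \simeq U_k\times_{U_{k-1}}(U_{k-1}\times_X Y)$). If $U_\bullet$ is inertia stable then each $\partial_i^U$ is inertia preserving, so by the lemma each $\partial_i^V$ is inertia preserving, and hence $V_\bullet$ is inertia stable.

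The main obstacle is the inertia-stability statement, and specifically establishing the base-change lemma together with the identity $I_{A\times_B C}\simeq I_A\times_{I_B}I_C$ and the bookkeeping that the simplicial face maps of $V_\bullet$ really are base changes over $X$ of those of $U_\bullet$; the groupoid-and-quotient part is essentially formal once one invokes effectivity and universality of colimits.
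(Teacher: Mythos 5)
Your proof is correct and takes essentially the same route as the paper: the first claim is handled by universality of colimits in the $(2,1)$-topos (the paper cites \cite[Theorem 6.4.1.5]{HTT}), and the inertia-stability claim reduces to compatibility of inertia with base change, for which the paper simply cites \cite[Tag: 0DUB]{stacks} --- precisely the lemma you prove by hand via $I_{A\times_B C}\simeq I_A\times_{I_B}I_C$. Your version is just a self-contained expansion of the paper's two citations, with the \v{C}ech-nerve bookkeeping made explicit.
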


\begin{proof}
The first claim follows from the fact that $X$ is a colimit of  $U_{\bullet}$  and colimits are universal in a $(2,1)$-topos \cite[Theorem 6.4.1.5]{HTT}. The second claim follows from \cite[Tag: 0DUB]{stacks}.  
\end{proof}

\begin{lemma}
\label{closedsub}
Let $U \to X$ be a representable \'etale surjection of smooth Deligne-Mumford stacks. If $Z$ is a closed substack of $U$, $ \pi_i: U\times_{X} U \to U$ are the projections, and $\pi^{-1}_0 (Z) = \pi^{-1}_1 (Z)$, then $Z$ descends to a closed substack $Z_X$ of $X$. 
\end{lemma}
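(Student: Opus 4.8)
The statement is an instance of étale descent for closed substacks, and the hypothesis $\pi_0^{-1}(Z) = \pi_1^{-1}(Z)$ is precisely the datum needed to descend $Z$ along the cover $U \to X$. The plan is to pass from the closed substack $Z$ to its quasi-coherent ideal sheaf $\cI_Z \subseteq \cO_U$, observe that the stated invariance condition endows $\cI_Z$ with a descent datum relative to the groupoid $R := U\times_X U \rightrightarrows U$ presenting $X$, and then invoke effective descent of quasi-coherent sheaves along the étale surjection $U \to X$ to produce an ideal sheaf $\cI \subseteq \cO_X$ whose associated closed substack $Z_X$ pulls back to $Z$.

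First I would record that, since $U \to X$ is representable étale, both projections $\pi_0, \pi_1: R \to U$ are étale, hence flat; consequently each preimage $\pi_i^{-1}(Z) = Z\times_{\pi_i, U} R$ is the closed substack of $R$ cut out by the pulled-back ideal $\pi_i^* \cI_Z \subseteq \cO_R$. The hypothesis $\pi_0^{-1}(Z) = \pi_1^{-1}(Z)$ then says exactly that $\pi_0^* \cI_Z = \pi_1^* \cI_Z$ as subsheaves of $\cO_R$. Because $\cO_U$ carries its canonical descent datum with respect to $U \to X$ (descending to $\cO_X$), and both $\pi_0^*\cO_U$ and $\pi_1^*\cO_U$ are canonically $\cO_R$, this equality of subsheaves is precisely the assertion that $\cI_Z$ inherits a descent datum from $\cO_U$.

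Next I would dispatch the cocycle condition on the triple fiber product $U\times_X U\times_X U$. Since the descent datum on $\cI_Z$ is the restriction of the canonical descent datum on $\cO_U$ — which satisfies the cocycle condition tautologically — the cocycle condition for $\cI_Z$ is automatic, with no separate computation needed. Effective descent for quasi-coherent sheaves along the étale surjection $U \to X$ then yields a quasi-coherent subsheaf $\cI \subseteq \cO_X$ with $\cI|_U = \cI_Z$. I would define $Z_X \subseteq X$ to be the closed substack cut out by $\cI$; because closed immersions are étale-local on the target, $Z_X \to X$ is a closed immersion, and by construction $Z_X \times_X U = Z$.

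The only genuinely delicate point is the reduction of the full descent datum to the single equality $\pi_0^{-1}(Z) = \pi_1^{-1}(Z)$: one must check that, for subobjects of the structure sheaf, the descent datum is a \emph{property} rather than extra structure, and that its cocycle condition is inherited for free from $\cO_U$. This is exactly where flatness of the $\pi_i$ enters, guaranteeing that preimages of $Z$ correspond to honest pullbacks of ideal sheaves, so that the invariance hypothesis upgrades to the scheme-theoretic equality of ideals. Everything else — existence of fiber products of Deligne–Mumford stacks, stability of closed immersions under base change, and effective descent of quasi-coherent sheaves along an étale surjection — is standard, so the argument amounts to a careful bookkeeping of descent data once this reduction is in place.
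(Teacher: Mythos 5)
Your proof is correct, but it takes a genuinely different route from the paper. The paper stays inside the groupoid formalism developed in its Section 3: it replaces $V$ by $Z$ in the pullback diagram of Proposition \ref{repquotprop}, observes that the invariance hypothesis makes $(Z, R_Z = \pi_0^{-1}(Z))$ a closed subgroupoid of $(U,R)$ with \'etale, surjective, representable projections, and takes $Z_X$ to be the resulting quotient stack, which embeds as a closed substack of $X = [U/R]$. You instead linearize the problem: you pass to the ideal sheaf $\cI_Z \subseteq \cO_U$, use flatness of the \'etale projections to upgrade the equality of preimages $\pi_0^{-1}(Z) = \pi_1^{-1}(Z)$ to an equality of subsheaves $\pi_0^*\cI_Z = \pi_1^*\cI_Z$ of $\cO_R$, note that for subobjects of $\cO_U$ a descent datum is a property rather than a structure (the cocycle condition being inherited from the tautological one on $\cO_U$), and then invoke effective descent of quasi-coherent sheaves along the \'etale surjection $U \to X$ to produce the ideal $\cI \subseteq \cO_X$ cutting out $Z_X$. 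Both arguments are sound; the trade-off is that the paper's version reuses the machinery it has already built (representable groupoids in Deligne--Mumford stacks and their algebraic quotients), keeping the lemma uniform with the surrounding section, whereas yours is more self-contained and rests only on the standard black box of fppf/\'etale descent for $\mathrm{QCoh}$ on algebraic stacks, sidestepping the $2$-categorical quotient formalism entirely. Your identification of exactly where flatness enters --- guaranteeing that substack preimages correspond to honest pullbacks of ideal sheaves --- is a point the paper's terse proof leaves implicit.
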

\begin{proof}

Replacing $V$ with $Z$ in Figure \ref{pullback}, we see that we have a closed subgroupoid $(Z,R_Z,\pi'_1,\pi'_2, c',e',i')$ of $(U,R,\pi_1,\pi_2, c,e,i)$. The projections of the subgroupoid are \'etale, surjective, and representable and it follows that $Z$ descends to a closed substack $Z_X$ of $X$.
\end{proof}

\subsection{Relative coarse moduli spaces}

We now recall the Keel-Mori theorem \cite{km1}, in particular its phrasing in the language of stacks \cite{conrad2}.

\begin{definition}
\cite{conrad2} The \textit{coarse moduli space} of an Artin stack $\cX$ over a scheme $S$ is a map $f: \cX \to X$ satisfying the following properties:
\begin{enumerate}
\item The morphism $f$ is initial among maps to algebraic spaces over $S$
\item For every algebraically closed field $k$, there is a bijection between elements of $X(k)$ and isomorphism classes of $\cX(k)$.
\end{enumerate}
\end{definition}

\begin{theorem}
(Keel-Mori) \cite{km1} \cite[Theorem 1.1]{conrad2}
\label{keelmori}
Given a scheme $S$ and a separated Artin stack $\cX$ of locally finite presentation over $S$, there exists a coarse moduli space $X$ of finite presentation over $S$ with finite inertia stack $I_S (\cX)$. The space $X$ satisfies the following properties.

\begin{enumerate}

\item The map $\pi: \cX \to X$ is proper and quasi-finite

\item If $X' \to X$ is a flat map of algberaic spaces, then $\pi': \cX \times_{X} X' \to X'$ is also a coarse moudli space.
\end{enumerate}

\end{theorem}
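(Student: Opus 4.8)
The defining properties of a coarse moduli space are local for the flat topology on the target, so the plan is to construct $X$ \'etale-locally on the prospective coarse space and then glue. Since the operative hypothesis is that the inertia $I_S(\cX)$ is finite, all automorphism groups are finite, and the first reduction is to a finite flat groupoid presentation. Starting from a smooth chart $W\to\cX$ through a chosen point and slicing $W$ by a regular sequence cutting down the relative fibre dimension, one produces, after shrinking, a scheme $U$ together with a quasi-finite flat surjection $U\to\cX$ onto an open substack which, after further \'etale localization on the coarse space, may be taken finite flat. Then $R:=U\times_{\cX}U$ yields a finite flat groupoid $s,t\colon R\rightrightarrows U$ with $[U/R]\cong\cX$ over that open substack, reducing the problem to the affine groupoid $s^\ast,t^\ast\colon A\rightrightarrows B$, where $U=\Spec A$ and $R=\Spec B$.

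For such a groupoid I would take the candidate local coarse space to be $\Spec C$, where
$$C \;:=\; \{\, a\in A : s^\ast a = t^\ast a \,\}$$
is the equalizer of $s^\ast$ and $t^\ast$, i.e. the ring of groupoid invariants. The technical heart is the finiteness lemma: $A$ is integral, indeed finite, over $C$, and $C$ is of finite presentation over the base. To see integrality of an element $a\in A$, I would form the characteristic polynomial of multiplication by $s^\ast a$ on the finite locally free $A$-module $t_\ast\cO_R$; the groupoid cocycle (associativity) identity shows that its coefficients are $R$-invariant, hence lie in $C$, and evaluating along the identity section $e\colon U\to R$ exhibits $a$ as a root of a monic polynomial over $C$. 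This is the groupoid analogue of Noether's finiteness of invariants for a finite group action, and finiteness of $A$ as a $C$-module together with finite presentation of $C$ then follow by standard Noetherian and finite-presentation arguments.

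With $\Spec C$ in hand I would verify the required properties locally. A morphism from $[U/R]$ to an algebraic space is the same as an $R$-invariant morphism out of $U$, which factors uniquely through $\Spec C$; this gives initiality among maps to algebraic spaces. The orbits of the finite groupoid are closed and separated by invariants, which yields the bijection on geometric points. Since $U\to\Spec C$ is finite by the lemma and $U\to\cX$ is faithfully flat, properness of $\pi\colon\cX\to X$ descends, while finiteness of the automorphism groups gives quasi-finiteness, establishing (1). For (2), flatness of $C\to C'$ keeps the equalizer sequence $0\to C\to A\to B$ exact after applying $-\otimes_C C'$, so $C'$ is again the invariant ring of the base-changed groupoid; hence $\Spec C'$ is the coarse space of $\cX\times_{\Spec C}\Spec C'$. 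This same flat--base--change compatibility shows the local constructions agree on overlaps of an \'etale cover, so they glue to a global algebraic space $X$ with $\cX\to X$ of finite presentation over $S$.

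The main obstacle is the interplay of the two genuinely substantive inputs: producing the finite flat groupoid presentation \'etale-locally, which is exactly where the finite inertia hypothesis is used in an essential way, and the finiteness lemma for the invariant ring $C$, which requires the full groupoid (cocycle) structure rather than merely a group action. Everything else --- initiality, the point-set bijection, properness, quasi-finiteness, and the gluing --- is comparatively formal once these two are in place.
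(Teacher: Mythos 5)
This theorem is imported background in the paper: it is the Keel--Mori theorem, stated with citations to \cite{km1} and \cite[Theorem 1.1]{conrad2}, and the paper gives no proof of its own. Your sketch reproduces essentially the argument of those cited sources --- \'etale-local reduction to a finite flat groupoid via the finite inertia hypothesis, the invariant-ring construction with the characteristic-polynomial (Cayley--Hamilton) finiteness lemma whose coefficients are invariant by the cocycle identity, and flat base change of the equalizer sequence to get property (2) and the gluing --- so it is correct and takes the same route as the paper's implicit proof.
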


Let us also record a lemma of Abramovich and Vistoli.

\begin{lemma}
\label{avref}
\cite[Lemma 2.2.2]{av1} Let $\cX \to X$ be a proper quasifinite morphism from a Deligne-Mumford stack to a noetherian scheme $X$ and let $X' \to X$ be a flat morphism of schemes, and denote $\cX' = X'\times_{X}\cX$.
\begin{enumerate}
\item If $X$ is the moduli space of $\cX$, then $X'$ is the moduli space of $\cX'$.
\item If $X' \to X$ is also surjective and $X'$ is the moduli space of $\cX'$, then $X$ is the moduli space of $\cX'$.
\end{enumerate}
\end{lemma}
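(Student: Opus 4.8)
The plan is to read part~(1) off the flat base change property already recorded in Theorem~\ref{keelmori}, and then to bootstrap it, via uniqueness of coarse moduli spaces and faithfully flat descent, into the converse statement of part~(2). For part~(1), if $\pi\colon\cX\to X$ exhibits $X$ as the coarse moduli space, then $\pi$ is proper and quasi-finite with finite inertia, and Theorem~\ref{keelmori}(2) applies to the flat morphism $g\colon X'\to X$: it asserts precisely that $\pi'\colon\cX'=\cX\times_X X'\to X'$ is again a coarse moduli space. If one prefers a hands-on check, the three defining features all survive base change---properness and quasi-finiteness are stable under base change, the isomorphism $\cO_{X'}\xrightarrow{\sim}\pi'_*\cO_{\cX'}$ follows from flat base change applied to $\cO_X\xrightarrow{\sim}\pi_*\cO_\cX$, and the bijection on geometric points is preserved because the geometric fibres of $\pi'$ are pulled back from those of $\pi$.

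For part~(2), I would first apply Theorem~\ref{keelmori} to produce the coarse moduli space $\pi_0\colon\cX\to X_0$ of $\cX$ itself; this is legitimate because $\cX$ is separated with finite inertia, being proper and quasi-finite over the scheme $X$. The universal property of $X_0$, applied to the given map $\cX\to X$ to an algebraic space, supplies a canonical morphism $h\colon X_0\to X$ through which $\pi$ factors. It now suffices to prove that $h$ is an isomorphism, since $X$ would then inherit the coarse-space property from $X_0$.

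To show $h$ is an isomorphism I would base change along $g$. As $g$ is flat, part~(1) applied to $\pi_0$ and to the flat projection $X_0\times_X X'\to X_0$ identifies $X_0\times_X X'$ as the coarse moduli space of $\cX\times_{X_0}(X_0\times_X X')=\cX\times_X X'=\cX'$. By hypothesis $X'$ is also a coarse moduli space of $\cX'$, so uniqueness of coarse moduli spaces shows the projection $X_0\times_X X'\to X'$---which is exactly the base change $h_{X'}$ of $h$ along $g$---is an isomorphism. Since $g$ is flat and surjective, hence faithfully flat, and being an isomorphism descends along faithfully flat morphisms, $h$ is itself an isomorphism; thus $X$ is the coarse moduli space of $\cX$.

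The main obstacle I expect is not any individual step but the bookkeeping needed to reduce part~(2) to the descent of a single isomorphism: one must verify that the canonical map $h$ is compatible with all the base changes, and that the identification furnished by uniqueness really is the one lying over $X'$, so that it is genuinely $h_{X'}$ being shown invertible. Once this compatibility is nailed down, faithfully flat descent of isomorphisms finishes the argument, and the more delicate task of matching geometric points together with their automorphism groups under base change is already subsumed in part~(1).
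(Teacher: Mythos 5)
The paper does not actually prove this lemma: it is quoted directly from Abramovich--Vistoli \cite[Lemma 2.2.2]{av1}, so there is no internal argument to compare against. Your proof is correct and is, in structure, exactly the argument behind the cited result: part (1) is the flat base-change clause of Keel--Mori (Theorem \ref{keelmori}(2)), and part (2) is obtained by forming the Keel--Mori coarse space $X_0$ of $\cX$, noting that the base change of the canonical map $h\colon X_0 \to X$ along $X' \to X$ is the comparison map between two coarse moduli spaces of $\cX'$, hence an isomorphism by uniqueness, and then descending the isomorphism along the faithfully flat map $X' \to X$. (As you implicitly noticed, the conclusion of part (2) should read ``$X$ is the moduli space of $\cX$''; the statement as printed carries a typo.)

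Two small points would tighten the write-up, though neither is a gap in the strategy. First, the Keel--Mori space $X_0$ is a priori an algebraic space rather than a scheme, so in part (2) you cannot invoke part (1) verbatim (it is stated for schemes); invoke instead Theorem \ref{keelmori}(2) directly, which is stated for flat maps of algebraic spaces, applied to the flat projection $X_0 \times_X X' \to X_0$. Second, descending ``is an isomorphism'' along a flat surjection uses that the surjection is a covering for the fppf or fpqc topology, which requires a quasi-compactness hypothesis beyond flat and surjective; this is harmless in the noetherian setting of the lemma and in every application in this paper (where $X' \to X$ is an \'etale surjection, hence fppf), but it deserves a word when you pass from the invertibility of $h_{X'}$ to that of $h$.
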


It is desirable for our principal application to be able to form relative coarse moduli spaces $\cX_{r. cs}$ where the base is a Deligne-Mumford stack $\cS$ and the morphism $\cX_{r. cs} \to \cS$ is taken to be representable. We formulate this notion in the following definition.

\begin{definition}
The \textit{relative coarse moduli space} of an Artin stack $\cX$ over an Artin stack $\cS$ is a morphism: $\cX \to \cX_{r. cs}$ over $\cS$ such that for any smooth surjection $U \to \cS$ from a scheme $U$, the pullback morphism $\pi_{U}: \cX_U \to \cX_{r. cs,U} $ is a coarse moduli space in the ordinary sense.
\end{definition}

The following theorem enables the construction of such relative coarse moduli spaces in our setting. We recall certain aspects of it's proof in our statement of the theorem, as we will utilize them below.

\begin{theorem}
\label{rcmthm} \cite[Theorem 3.1]{AOV3}
Given a morphism of irreducible Deligne-Mumford stacks $\cX \to \cS$ and an \'etale surjection $U \to \cS$ from a scheme $U$, denote $U_X := U \times_{\cS} \cX$. Then there is a diagram of the following form.

\begin{equation}
\label{rcm}
\begin{tikzcd}[row sep=scriptsize, column sep=scriptsize]
U_X \times_{\cX} U_X \arrow[dr] \arrow[dd,bend left = 30] \arrow[rr, shift right = 2, "\pi_{2,1}"]   \arrow[rr, shift left=2, "\pi_{1,1}"]& & U_X  \arrow[dr]  \arrow[rr] & & \cX \arrow[dr] \arrow[dd,bend left = 30, crossing over] \\
& (U_X \times_{\cX} U_X)_{cs} \arrow[dl] \arrow[rr, shift left = 2, "\pi_{1,0}"]  \arrow[rr, shift right=2, "\pi_{2,0}"]& & (U_X)_{cs}  \arrow[dl]  \arrow[rr]  & & \cX_{r. cs} \arrow[dl] \\
U \times_{\cS} U  \arrow[rr, shift right = 2, "\pi_{2,2}"] \arrow[rr, shift left=2, "\pi_{1,2}"]& & U \arrow[from = uu, crossing over, bend left = 30]  \arrow[rr]   & & \cS \arrow[from = uu,bend left = 30, crossing over]
\end{tikzcd}
\end{equation}
The diagram is commutative in the natural way. It satsifies the following additional properties.
\begin{enumerate}
\item For $i \in \{1,2\}$, the morphisms $\pi_{i,0}$ are \'etale and surjective, and the following diagram is 2-cartesian
\begin{equation}
\label{rcm2}
\begin{tikzcd}
U_X \times_{\cX} U_X  \arrow[r, "\pi_{i,1}"] & U \\
(U_X \times_{\cX} U_X)_{cs} \arrow[from = u] \arrow[r, "\pi_{i,0}"] & (U_X)_{cs} \arrow[from = u]
\end{tikzcd}
\end{equation}

\item The morphism $\cX_{r. cs} \to \cS$ is representable, and the morphism $U_X \to (U_X)_{cs}$ is the pullback of the morphism $ \cX \to \cX_{r, cs} $ by $ U_X \to \cX $. In particular, the morphism $ \cX \to \cX_{r, cs} $ is the desired relative coarse moduli space of of $\cX \to \cS$.

\end{enumerate}

\end{theorem}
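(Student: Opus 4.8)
The plan is to construct $\cX_{r. cs}$ by forming \emph{ordinary} coarse spaces of the terms of the Čech groupoid of the étale cover $U_X \to \cX$ and checking that these assemble into an étale groupoid whose quotient presents the desired relative space. First I would record the elementary functoriality: since $U \to \cS$ is representable and étale, so is $U_X = U \times_{\cS} \cX \to \cX$, and hence both projections $\pi_{i,1}\colon U_X \times_{\cX} U_X \to U_X$ are representable, étale and surjective. In particular $U_X$ and $U_X \times_{\cX} U_X$ are Deligne--Mumford stacks which inherit finite inertia from $\cX$, so Theorem \ref{keelmori} (Keel--Mori) produces coarse moduli spaces $U_X \to (U_X)_{cs}$ and $U_X \times_{\cX} U_X \to (U_X \times_{\cX} U_X)_{cs}$, both algebraic spaces, and the structural maps $U_X \to U$ and $U_X \times_{\cX} U_X \to U \times_{\cS} U$ factor through them by the universal property.

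The heart of the matter, and the step I expect to be the main obstacle, is claim (1): that the induced maps $\pi_{i,0}$ on coarse spaces are again étale and surjective and that the squares \eqref{rcm2} are $2$-cartesian. Here I would invoke the compatibility of coarse moduli formation with flat base change, namely Lemma \ref{avref}(1) (equivalently Theorem \ref{keelmori}(2)). Working étale-locally on $(U_X)_{cs}$, a representable étale morphism of Deligne--Mumford stacks with finite inertia descends to an étale morphism of coarse spaces through which the source stack is recovered by base change; applying this to $\pi_{i,1}$ yields that $\pi_{i,0}$ is étale surjective together with the identification $U_X \times_{\cX} U_X \cong U_X \times_{(U_X)_{cs}} (U_X \times_{\cX} U_X)_{cs}$, which is exactly the cartesian content of \eqref{rcm2}. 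The delicacy is purely that coarseness must be checked to commute with the groupoid structure maps, not with arbitrary morphisms; once this is in place the rest is formal descent.

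With claim (1) established, the pair $(U_X \times_{\cX} U_X)_{cs} \rightrightarrows (U_X)_{cs}$ inherits a groupoid structure from $U_X \times_{\cX} U_X \rightrightarrows U_X$: composition, unit and inverse descend because the coarse space is a functor and, by claim (1), carries the relevant fibre products to fibre products. Since its projections are étale and surjective, this is an étale groupoid in algebraic spaces, and I define $\cX_{r. cs}$ to be its quotient stack. The coarse maps assemble into a morphism of groupoids from $(U_X \times_{\cX} U_X)_{cs} \rightrightarrows (U_X)_{cs}$ down to the presentation $U \times_{\cS} U \rightrightarrows U$ of $\cS$; because $(U_X)_{cs} \to U$ is a morphism of algebraic spaces, hence representable, and the square over $U \to \cS$ is cartesian (again by Lemma \ref{avref}), the induced map on quotients $\cX_{r. cs} \to \cS$ is representable, which is the first part of claim (2).

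Finally I would verify the defining property by pulling everything back along $U \to \cS$: one has $\cX \times_{\cS} U = U_X$ and $\cX_{r. cs} \times_{\cS} U = (U_X)_{cs}$ by construction, and $\cX \to \cX_{r. cs}$ restricts over the chart to $U_X \to (U_X)_{cs}$, which is an honest coarse moduli space in the sense of Theorem \ref{keelmori}. This is precisely the condition defining a relative coarse moduli space, so $\cX \to \cX_{r. cs}$ is the desired relative coarse space. To make the identification $U_X \cong \cX \times_{\cX_{r. cs}} (U_X)_{cs}$ fully rigorous I would descend along the étale cover $U_X \to \cX$, checking that both sides acquire the same pullback to $U_X \times_{\cX} U_X$ by claim (1) and concluding with Lemma \ref{avref}(2), which transfers the coarse-space property back across the surjective étale base change. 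This completes claim (2) and the construction of the diagram \eqref{rcm}.
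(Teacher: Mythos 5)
Your construction takes the opposite route to the paper's: the paper obtains the factorization $\cX \to \cX_{r.cs} \to \cS$ by directly citing \cite[Theorem 3.1]{AOV3} and then recovers the chart-level coarse spaces by flat base change, while you rebuild $\cX_{r.cs}$ from the Keel--Mori coarse spaces of the \v{C}ech groupoid of $U_X \to \cX$ --- in effect reproving the cited theorem by \'etale descent. That strategy is viable (it is essentially how the result is proved in \cite{AOV3}), but the step you yourself single out as the heart of the matter rests on a false assertion.

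You claim that a representable \'etale morphism of Deligne--Mumford stacks with finite inertia ``descends to an \'etale morphism of coarse spaces through which the source stack is recovered by base change.'' This is false in general: the atlas $\AA^1 \to [\AA^1/(\ZZ/2)]$ (sign action, characteristic $0$) is representable, \'etale and surjective between stacks with finite inertia, yet the induced map of coarse spaces is $x \mapsto x^2$, which is not \'etale at the origin, and the base change $[\AA^1/(\ZZ/2)] \times_{\AA^1} \AA^1$ is a quotient of a nodal curve rather than $\AA^1$; working \'etale-locally on the coarse space does not help, since the example is already local. Neither Lemma \ref{avref}(1) nor Theorem \ref{keelmori}(2) yields your claim: they allow you to pull a coarse space back along a flat map of algebraic spaces, not to push an \'etale morphism of stacks down to coarse spaces. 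What makes claim (1) true in the present situation is a property of $\pi_{i,1}$ that you never invoke: it is stabilizer preserving, being the pullback of the morphism of algebraic spaces $U \times_\cS U \to U$ along $U_X \to U$. Concretely, $U_X \to U$ factors through $(U_X)_{cs}$ by the universal property of the coarse space, so
$$U_X \times_{\cX} U_X \;\cong\; U_X \times_U (U \times_\cS U) \;\cong\; U_X \times_{(U_X)_{cs}} \bigl( (U_X)_{cs} \times_U (U \times_\cS U) \bigr),$$
and $(U_X)_{cs} \times_U (U \times_\cS U) \to (U_X)_{cs}$ is \'etale, hence flat; now Theorem \ref{keelmori}(2) (equivalently Lemma \ref{avref}(1)) applies and identifies $(U_X \times_{\cX} U_X)_{cs}$ with $(U_X)_{cs} \times_U (U \times_\cS U)$. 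This gives simultaneously that $\pi_{i,0}$ is \'etale and surjective and that \eqref{rcm2} is $2$-cartesian. Once claim (1) is repaired in this way (and the same argument is run for the triple fibre product, so that composition descends), the remainder of your descent argument --- the \'etale groupoid structure on coarse spaces, representability of $\cX_{r.cs} \to \cS$, and the verification of the relative coarse moduli property via Lemma \ref{avref} --- goes through.
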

\begin{proof}

The morphism $U_X \to \cX$ is induced by pullback of $U \to \cS$ and likewise $U_X \times_{\cX} U_X$ may be regarded as the pullback of $U_X$ under either projection $(U \times_{\cS} U) \to U$. The existence of the factorization $\cX \to \cX_{r. cs} \to \cS$ follows immediately from \cite[Theorem 3.1]{AOV3}.  Note that we may also factor $U_X \to U$ as  $U_X \to (U_X)_{cs} \to U$ and likewise $(U_X \times_{\cX} U_X) \to (U \times_{\cS} U)$ as  $U_X \times_{\cX} U_X \to (U_X \times_{\cX} U_X)_{cs} \to U \times_{\cS} U$. The horozontal morphisms arise from the fact that the pullback of a coarse moduli map $\cY \to Y$ by a representable flat morphism is also a coarse moduli map\label{avref} \cite[Theorem 3.1]{AOV3}.

\end{proof}

\subsection{Descent of destackification operations}
We now show that the destackification operations of Definition \ref{ssbu} utilized in  \cite{Bergh} can be relativized in our situation.

\begin{corollary}
\label{opdescent}
Fix a pair $(X,D)$ where $X$ is a smooth Deligne-Mumford stack over a scheme $Y$  and $D$ is a simple normal crossings divisor. Assume $f: V \to X$  is an \'etale surjection of smooth and tame Deligne-Mumford stacks such that the pullback of $X-D$ is a scheme and the projection morphisms $V \times_{X} V \to V$ are inertia preserving, then Bergh's destackification procedure decomposes as a natural sequence of operations, each of which may be descended from $V$ to an operation on $X$ which will pullback to the original operation.
\end{corollary}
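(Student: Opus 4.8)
The plan is to run Bergh's functorial destackification on the cover $V$ and then descend the resulting sequence of smooth stacky blow-ups one operation at a time, using functoriality along the two groupoid projections to force the descent criterion of Lemma \ref{closedsub}. Since Bergh's theorem applies to the smooth tame Deligne-Mumford stack $V$, it produces a canonical sequence
$$(V_m, E_m) \to \cdots \to (V_0, E_0) = (V, E_0),$$
in which each arrow is either the root construction along a component of $E_k$ or the blow-up of a smooth center $Z_k \subset V_k$ meeting $E_k$ with normal crossings (Definition \ref{ssbu}). I would prove by induction on $k$ that each operation descends along an \'etale presentation $V_k \to X_k$, where $X_0 = X$, $V_0 = V$, $V_k = V \times_X X_k$, and the groupoid $V_k \times_{X_k} V_k \double V_k$ is inertia stable with representable \'etale projections whose restriction over the scheme-locus (the preimage of $X - D$) is an isomorphism at every stage.

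The heart of the argument is the inductive step. At stage $k$ the two projections $\pi_0, \pi_1 \colon V_k \times_{X_k} V_k \to V_k$ are \'etale, hence smooth, and are stabilizer preserving because the groupoid is inertia stable; thus Bergh's functoriality applies to each of them. Functoriality identifies the destackification of $V_k \times_{X_k} V_k$ with both $\pi_0^{*}$ and $\pi_1^{*}$ of the destackification of $V_k$, so comparing the two canonical sequences term by term yields $\pi_0^{-1}(Z_k) = \pi_1^{-1}(Z_k)$ (and likewise for any boundary component serving as a root-stack center). The groupoid descent of Lemma \ref{closedsub} then carries $Z_k$ to a closed substack $Z_{X_k} \subset X_k$, on which I perform the corresponding blow-up or root construction to obtain $X_{k+1}$; since the center is disjoint from the scheme-locus, the descended operation is an isomorphism over $U = X - D$.

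To close the induction I must check that the pulled-back data reproduces Bergh's next stage and preserves the running hypotheses. Because blow-ups and root constructions commute with flat, and in particular \'etale, base change, the pullback $V_{k+1} := V_k \times_{X_k} X_{k+1}$ is canonically the blow-up (respectively root stack) of $V_k$ along $Z_k$, that is, precisely the term $V_{k+1}$ of Bergh's sequence; and Proposition \ref{stablepullback} guarantees that the pulled-back groupoid $V_{k+1} \times_{X_{k+1}} V_{k+1} \double V_{k+1}$ remains inertia stable, so the presentation hypotheses persist. Smoothness is preserved since every center is smooth with normal crossings against the boundary.

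I expect the main obstacle to be exactly the verification that Bergh's functoriality may be invoked along the projections at every stage, that is, that smoothness and, crucially, the stabilizer-preserving property survive each descent. This is what the inertia-stability hypothesis secures at the base and what Proposition \ref{stablepullback} propagates through the induction; the technical crux is keeping the functorial (canonical) choice of centers on $V_k \times_{X_k} V_k$ synchronized with the descent condition $\pi_0^{-1}(Z_k) = \pi_1^{-1}(Z_k)$. A secondary point needing care is confirming that each destackification center lies in the preimage of $D$, so that the entire process is trivial over $U$ and the scheme-locus hypothesis is maintained; this follows because destackification is an isomorphism wherever the stack already agrees with its coarse space.
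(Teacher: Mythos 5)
Your induction correctly reproduces, and in fact tightens, the first two components of the paper's argument: descent of the smooth blow-up centers via the coincidence $\pi_0^{-1}(Z_k)=\pi_1^{-1}(Z_k)$ forced by Bergh's functoriality along the smooth, surjective, stabilizer-preserving projections, followed by Lemma \ref{closedsub}; and descent of the root constructions along components of $f^{-1}(D)$ by \'etale functoriality, with Proposition \ref{stablepullback} propagating inertia stability through the stages. This matches the paper's treatment of the stacky blow-up sequence.

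However, there is a genuine gap: you have descended only the blow-up sequence $(V_m,E_m)\to\cdots\to(V_0,E_0)$, whereas ``Bergh's destackification procedure'' in the sense of this corollary also includes the factorization of the coarse moduli map $V_m \to (V_m)_{cs}$ as a gerbe followed by a sequence of root constructions, and this part must be descended as well. The paper handles it as a separate third step: since the pullback of $X-D$ is a scheme, $V$ (hence $V_m$) is generically a scheme, so the gerbe in Bergh's factorization is trivial and only root constructions occur; these are then descended using the relative coarse moduli space machinery of Theorem \ref{rcmthm}, producing a sequence of root stacks $X \to X_1 \to \cdots \to X_{r.cs}$ whose pullback recovers $V_m \to \cdots \to (V_m)_{cs}$. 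This step cannot be absorbed into your induction, because the morphism $V_m \to (V_m)_{cs}$ is not a smooth stacky blow-up in the sense of Definition \ref{ssbu} and its target is an algebraic space, not an \'etale cover of anything over $X$; descending it requires the representability statement of Theorem \ref{rcmthm}, not Lemma \ref{closedsub}. The omission matters for the application: in Theorem \ref{hatthm} it is precisely this descended coarse factorization that produces $\hat{X}_{1,r.cs}$ and the representable morphism $\hat{X}_{1,r.cs} \to X_2$, which is the entire purpose of the corollary. (A minor additional slip: your claim that the groupoid projections restrict to isomorphisms over the preimage of $X-D$ is not what you need; the projections are \'etale surjections there, and what is true and relevant is that the descended \emph{operations} are isomorphisms over $U$.)
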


\begin{proof}
We utilize the fact the that the algorithm of Bergh is functorial with respect to smooth, surjective, and inertia preserving morphisms.
\begin{enumerate}
\item \textit{Blowing up smooth substacks of $V$} - If $Z$ is a closed substack of $V$ blown up during the course of the algorithm, then its pullback to $V \times_{X} V$ is independent of the choice of the projection used to form the pullback and Lemma \ref{closedsub} ensures that it descends to a closed substack of $X$. The operation of blowing up is functorial with respect to smooth morphisms, so the operation (and not merely $Z$) descends to $X$.

\item\textit {Root stacks}: The algorithm of Bergh only takes roots of components of the divisor $f^{-1}(D)$. These components and the rooting index can be descended to an $X$ since the formation of root stacks in functorial with respect to \'etale morphisms.

\item\textit {Descent of $V \to V_{cs}$ to $X \to X_{r.cs}$}: In order to apply Theorem \ref{rcmthm}, we note that Bergh's algorithm yields a decomposition of the map $V \to V_{cs}$ into a composition of gerbes, and root stacks along components of the simple normal crossings divisor $f^{-1}(D)$. Since $V$ is generically a scheme, the algorithm will use only root stacks. We are now in a situation where Theorem \ref{rcmthm} can be used to construct $X_{r.cs}$ and the morphism $X \to X_{r.cs}$. Since $V \to V_{cs}$ is the pullback of $X \to X_{r. cs}$, we may descend our root stacks  $V \to V_1 \to \cdots \to V_{cs}$ to a sequence of root stacks $X \to X_1 \to \cdots \to X_{cs}$ 
\end{enumerate}

\end{proof}

\section{Invariance of simple homotopy type under the destackifcation operations.}

We show that the operations used in Corollary \ref{opdescent} do not modify the simple homotopy type of the boundary complex. We denote by $(X,D)$ our standard pair where $X$ is a smooth Deligne-Mumford stack and $D$ is a simple normal crossings divisor on $X$

\begin{remark}
\label{keyremark}
One of the fundamental properties of the functorial weak factorization theorem \cite[Theorem 1.4.1]{AT2}  is that it only ever blows up subvarieities that have normal crossing intersection with the strict transform of the original divisors. This ensures that the following lemma applies to any morphism appearing in the decomposition it yields.

Likewise, the smooth blow-ups appearing in Bergh's destackification algorithm \cite{Bergh} only take place over centers $C$ having simple normal crossings intersection with the fixed divisor, so the lemma below applies there as well.

In our application, all morphisms appearing in either the destackification algorithm, or in the functorial weak factorization algorithm, will be isomorphisms along a fixed open subset $U$. 
\end{remark}
\begin{lemma}

\label{centerdescent}
Let $X_1$ be a smooth proper Deligne-Mumford stack with $D_1$ a simple normal crossings divisor on $X_1$ and let $f: (X_2,D_2) \to (X_1,D_1)$ be the blowup of a smooth closed substack $Z$  having normal crossings with $D_1$, then there is a simple homotopy equivalence between $\Delta (X_1,D_1)$ and $\Delta(X_2,D_2)$
\end{lemma}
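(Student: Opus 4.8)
The plan is to reduce the statement to an explicit combinatorial description of how the dual complex changes under the blow-up, and then to recognize that change as a composite of a stellar subdivision and a sequence of elementary collapses, each of which is a simple homotopy equivalence.

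First I would pass to an \'etale presentation. Choose a representable \'etale surjection $W \to X_1$ with $W$ a scheme on which $D_1$ restricts to an snc divisor and on which $Z$ restricts to $Z_W$ meeting $D_{1,W}$ with normal crossings. Since blow-ups commute with the flat base change $W \to X_1$, we get $W\times_{X_1}X_2 \cong \mathrm{Bl}_{Z_W}W$, and likewise over $W\times_{X_1}W$; thus the groupoid presenting $(X_1,D_1)$ blows up to a groupoid presenting $(X_2,D_2)$. By Definition \ref{bcdef2} the stack boundary complexes are the coequalizers of the associated unordered $\Delta$-complexes, so it suffices to produce the required simple homotopy equivalence at the level of these presentations, compatibly with the groupoid, and then descend.

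Next I would carry out the scheme-level computation locally. Write $A = \{i : Z\subseteq D^{(i)}\}$ for the set of boundary components containing $Z$, let $E$ be the exceptional divisor, and let $\Delta^Z\subseteq\Delta(X_1,D_1)$ be the subcomplex of strata $D_T$ with $Z\cap D_T\neq\emptyset$. The normal-crossings hypothesis gives, \'etale-locally, the coordinate model in which $E=\mathbb{P}(N_{Z})$ and each strict transform $\tilde D^{(i)}$ meets $E$ exactly when $Z\cap D^{(i)}\neq\emptyset$. From this I read off two cases. If $Z$ is itself a stratum (so $Z=\bigcap_{i\in A}D^{(i)}$ with no transverse directions), the face $\bigcap_{i\in A}\tilde D^{(i)}$ becomes empty while $E$ contributes a vertex joined to every proper face of the simplex on $A$; this is precisely the stellar subdivision of that face, hence a simple homotopy equivalence. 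In general every simplex of $\Delta(X_1,D_1)$ persists and one adjoins the cone $\{E\}*\Delta^Z$. The key geometric input is that $Z\subseteq D_1$ forces $A\neq\emptyset$, and then $\Delta^Z$ is a cone with apex any $v\in A$ (since $Z\cap D_T\neq\emptyset$ implies $Z\cap D_{T\cup\{v\}}=Z\cap D_T\neq\emptyset$), hence collapsible. Consequently the added simplices form the relative star of $E$ over a collapsible base, and $\Delta(X_2,D_2)\searrow\Delta(X_1,D_1)$ is an elementary collapse sequence.

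Finally I would descend. The subdivision and the collapses above are canonically determined by the data $(Z,D,E)$, hence equivariant for the groupoid $W\times_{X_1}W$; functorial subdivisions and collapses pass to coequalizers, so they assemble into a simple homotopy equivalence $\Delta(X_2,D_2)\to\Delta(X_1,D_1)$ of generalized $\Delta$-complexes. I expect the main obstacle to be the non-stratum case: proving that attaching $\{E\}*\Delta^Z$ yields a genuine collapse rather than a mere homotopy equivalence — this is exactly where the cone (hence collapsible) structure of $\Delta^Z$, forced by $A\neq\emptyset$, is indispensable — and then checking that this collapse can be chosen equivariantly so that it survives passage to the stacky coequalizer. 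The stratum case, by contrast, is a subdivision and is therefore automatically a simple homotopy equivalence.
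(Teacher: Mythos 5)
Your high-level skeleton (pass to an \'etale atlas, establish the equivalence at the level of the presentation compatibly with the groupoid, then descend to the coequalizer) is the same as the paper's. The difference is that the paper simply invokes Stepanov's lemma \cite[Lemma 1]{stepanov} for the scheme-level statement and concentrates on compatibility with the two projections, whereas you attempt to reprove the scheme-level combinatorics, and your description is wrong at exactly the delicate point. The problem is your identification of the added part with $\{E\}*\Delta^Z$, where $\Delta^Z\subseteq\Delta(X_1,D_1)$ is the subcomplex of strata meeting $Z$, together with the claim that $\Delta^Z$ is a cone with apex any $v\in A$, hence collapsible. Both fail because the boundary complex is an (unordered/generalized) $\Delta$-complex whose faces are \emph{irreducible components} of intersections, not a simplicial complex on the set of divisor labels: a face of $\Delta^Z$ with label set $T$ may have several distinct cofaces with label set $T\cup\{v\}$ meeting $Z$, so ``adjoin $v$'' is not a cone structure; and $E\cap\widetilde{D}_T$ can be disconnected even when the corresponding component of $D_T$ is irreducible, so several new simplices can sit over a single face of $\Delta^Z$.

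Here is a concrete counterexample to both claims. In $X_1=\mathbb{P}^1_x\times\mathbb{P}^1_y\times\mathbb{P}^1_z$ let $D^{(1)}=\{x=0\}$ and let $D^{(2)}$ be a general irreducible smooth surface of multidegree $(1,2,0)$, so that $D^{(1)}\cap D^{(2)}=C_1\sqcup C_2$ is two disjoint lines; then $\Delta(X_1,D_1)$ is a circle (two vertices, two edges). Take $Z=\{x=0,\,z=z_0\}\subseteq D^{(1)}$, a smooth curve having normal crossings with $D_1$ and meeting $D^{(2)}$ in one point of $C_1$ and one point of $C_2$; here $A=\{1\}$. Then $\Delta^Z$ is the whole circle: it is not a cone and not collapsible, so your collapse argument has no valid base; worse, your recipe predicts $\Delta(X_2,D_2)=\Delta(X_1,D_1)\cup(\{E\}*\Delta^Z)$, a disk, which is not simple homotopy equivalent to the circle $\Delta(X_1,D_1)$, i.e.\ your description contradicts the statement being proved. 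The actual complex has one edge $\{E,1\}$, \emph{two} edges with label $\{E,2\}$ (one over each point of $Z\cap D^{(2)}$) and two triangles, and this does collapse onto the circle. The repair is to replace $\Delta^Z$ by the dual complex of $(Z,\,Z\cap D_1)$, whose faces are components of $Z\cap D_T$ rather than components of $D_T$ meeting $Z$: the new part of $\Delta(X_2,D_2)$ is the cone with apex $E$ over that complex, attached along the natural (generally non-injective) map to $\Delta(X_1,D_1)$, and that complex really is a cone -- adjoining $v\in A$ to a label set does not change the underlying component since $Z\cap D_{T\cup\{v\}}=Z\cap D_T$ -- hence collapsible, so the collapse goes through. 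This corrected combinatorial statement is precisely the content of Stepanov's lemma; once you cite it (as the paper does), your remaining steps (atlas, equivariance under the groupoid, passage to the coequalizer) line up with the paper's proof.
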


\begin{proof}
Choose an \'etale surjection $h: U_1 \to X_1$ from a scheme $U_1$ and set $U_2 = U_1 \times_{X_1} X_2$, then the morphism $f_U: U_2 \to U_1$ is a blowup of a smooth subscheme $$ Z^U_1 = Z \times_{X_1} U_1 \subset U_1$$ that has normal crossings intersections with the divisor $D^U_1 = D_1 \times_{X_1} U_1$, consequently by Stepanov's lemma \cite[Lemma 1]{stepanov}, there is a simple homotopy equivalence 
$$\Delta(U_2, D^U_2) \cong \Delta(U_1 , D^U_1 )$$ where $D^U_2 = U_1 \times_{X_1} D_2$.

Since the projection morphisms of the groupoid $$(U_2, U_2 \times_{X_2} U_2,\pi'_1 , \pi'_2, c')$$ are \'etale, we obtain projection morphisms between boundary complexes with appropriate divisors $D^R_2,D^R_1$ induced by pullback:

\begin{equation}
\begin{tikzcd}[column sep=small]
\Delta(  U_2 \times_{X_2} U_2 ,D^R_2)  \arrow[rr] \arrow[d, xshift=0.7ex] \arrow[d, xshift=-0.7ex] & & \Delta(U_1\times_{X_1} U_1,D^R_1) \arrow[d, xshift=0.7ex] \arrow[d, xshift=-0.7ex]   \\
\Delta(U_2, D^U_2) \arrow[rr] & &  \Delta(U_1,D^U_1) 
\end{tikzcd}
\end{equation}

We shall denote the projection morphisms $U_1\times_{X_1} U_1 \double U_1$  by $\pi_i$, and we shall take $g$ be the morphism $U_1\times_{X_1} U_1 \to X_1$ and set $Z^R_1:= g^{-1}(Z) $.

Now suppose that $\Gamma$ is an irreducible component of the intersection of $k$ distinct irreducible components $(D'_{R_1,i_1}, ..., D'_{R_1, i_m})$  of  $D^R_1$ corresponding to some $(k-1)$-simplex of $\Delta(  U_2 \times_{X_2} U_2 ,D^R_2)$. If $Z^R_1$ has proper non-empty normal crossings intersection with $\Gamma$, then the combinatorial operation on the boundary complex induced by blowing up (see \cite{stepanov}) yields a $k$-simplex lying above the simplex corresponding to $\Gamma$ as depicted in the bottom row of Figure \ref{fig:my-label}. On the other hand, if $\Gamma \subset Z^R_1$, then \cite{stepanov} shows that blowing up corresponds to a star subdivision of the $k-1$-simplex associated $\Gamma$ as depicted in the middle row of Figure \ref{fig:my-label}. 

Note that the operations done to the simplex associated to $\Gamma$ in $R_1$ correspond to the same operations done to the simplex associated to $\pi_i(\Gamma)$ in $U_1$  and the component  $\pi_j (\Gamma)$ of  $\pi_j(D'_{R_1, i_1}), ...,\pi_j( D'_{R_1, i_m})$. consequently, the simple homotopy equivalence between $\Delta(U_2,  D^U_2)$ and $\Delta(U_1 ,  D^U_1)$ may be lifted to a simple homotopy equivalence between the boundary complexes of the relations $$ \Delta(R_2, D^R_2) \cong  \Delta(R_1,D^R_1) $$ in a compatible way. Consequently, it follows that there is a simple homotopy equivalence between the generalized boundary complexes $\Delta (X_1,D_1)$ and $\Delta(X_2,D_2)$.

\end{proof}
\begin{figure}
   \center{\includegraphics[width = \textwidth]{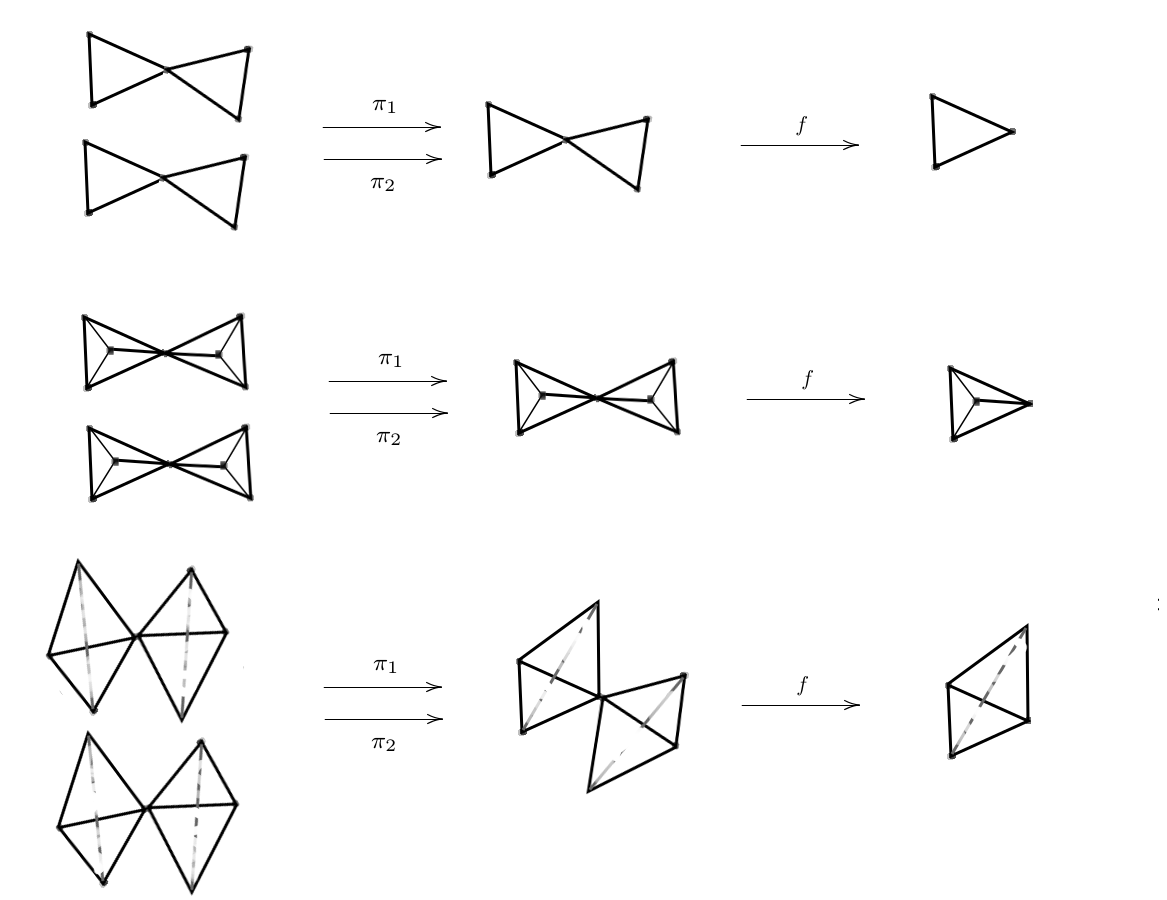}}
\caption{\label{fig:my-label} Two modifications of a presentation of a boundary complex.}
\end{figure}
We show that the formation of root stacks does not alter the boundary complex.

\begin{lemma}
\label{rootdescent}
Let $f: (X,D) \to (X',D')$ be a root construction over a divisorial component $E\subset D$, then the induced morphism of complexes $\Delta f: \Delta (X,D) \to \Delta(X',D')$ is bijective.
\end{lemma}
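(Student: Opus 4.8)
The plan is to show that $f$ induces a cell-by-cell bijection of the two boundary complexes, compatible with faces, by exploiting the fact that a root construction modifies only the stacky structure along $E$ and is otherwise a homeomorphism on coarse spaces. First I would recall the local structure: étale-locally on $X'$ we may trivialize $\mathcal{O}_{X'}(E)$ and write $E=\{t=0\}$, so that $f$ is modeled by $[\Spec(\mathcal{O}_{X'}[s]/(s^r-t))/\mu_r]\to X'$ with root divisor $\tilde E=\{s=0\}$. In particular $f$ is finite, restricts to an isomorphism over $X'-E$, and over $E$ the map $\tilde E\to E$ is a $\mu_r$-gerbe (in the local model $\{s=0\}=E\times B\mu_r$). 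Consequently the irreducible components of $D$ are in canonical bijection with those of $D'$: the distinguished component $\tilde E$ corresponds to $E$, and every other component is the reduced preimage, under the isomorphism $f|_{X-\tilde E}$, of the corresponding component of $D'$.

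Next I would promote this to a bijection of strata. Fix components $D_{i_1},\dots,D_{i_k}$ of $D$ with counterparts $D'_{i_1},\dots,D'_{i_k}$ of $D'$. If none of them is $\tilde E$, the $k$-fold intersection lies over $X'-E$ where $f$ is an isomorphism, so the two intersections agree together with their irreducible decompositions. If one of them is $\tilde E$, then $f$ restricts to a morphism $\tilde E\cap D_{i_2}\cap\cdots\cap D_{i_k}\to E\cap D'_{i_2}\cap\cdots\cap D'_{i_k}$ which, in the local model above, is again a $\mu_r$-gerbe $(E\cap D'_{i_2}\cap\cdots)\times B\mu_r$, hence a homeomorphism on underlying topological spaces and in particular a bijection on irreducible components. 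Thus $f$ carries the $k$-fold intersection strata of $D$ bijectively onto those of $D'$, compatibly with the face relations given by inclusion of strata; this is precisely a bijection on simplices of the classical complex of any snc presentation.

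Finally I would phrase this through Definition \ref{bcdef2}. Choosing an étale surjection $Z'\to X'$ from a scheme on which $D'$ is snc and $\mathcal{O}(E)$ is trivialized, the cyclic cover $Z=\Spec_{Z'}(\mathcal{O}_{Z'}[s]/(s^r-t))\to X$ is, in characteristic $0$, an étale surjection from a smooth scheme with $D_Z$ snc (here $\tilde E$ pulls back to the honest smooth divisor $\{s=0\}$), and the same construction on $Z'\times_{X'}Z'$ presents the groupoid computing $\Delta(X,D)$. Since the root construction is functorial for the étale projection morphisms, as already used in Corollary \ref{opdescent}, the two maps $\Delta^{un}(Z\times_X Z)\rightrightarrows\Delta^{un}(D_Z)$ correspond to those for $X'$ under the stratum bijection of the previous paragraph, so the induced map on coequalizers $\Delta f$ is a bijection. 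I expect the main obstacle to be exactly the middle step: verifying that intersecting with the root divisor $\tilde E$ neither creates, merges, nor destroys strata, i.e.\ that $\tilde E$ and all its further intersections are $\mu_r$-gerbes over the corresponding loci in $X'$ and therefore have identical irreducible components. Once this gerbe statement is in hand, compatibility with the coequalizer presentation and with the face maps follows formally from the étale-local functoriality of the root construction.
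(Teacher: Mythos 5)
Your overall route is the same as the paper's: reduce to Cadman's local model $[\Spec(A[s]/(s^r-t))/\mu_r]$, present the root stack by its cyclic-cover atlas, observe that the root construction only changes the multiplicity of the boundary divisor and not its combinatorics, and descend through the coequalizer of Definition \ref{bcdef2}. However, the middle step --- which you yourself flag as the crux --- rests on a false assertion: it is not true that a $k$-fold intersection of components of $D$, none of which is $\tilde E$, ``lies over $X'-E$.'' Take $X'=\bA^3$, $D'=\{x_1x_2x_3=0\}$, $E=\{x_1=0\}$, and root along $E$: the stratum of $D$ lying over $\{x_2=x_3=0\}$ is the preimage of the $x_1$-axis, which meets $E$ at the origin, so $f$ is not an isomorphism near all of it (the point over the origin carries a $\mu_r$ automorphism group). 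The conclusion you want is nevertheless true, and the correct justification is one you already gesture at in your opening sentence: a root construction induces an isomorphism on coarse (underlying topological) spaces everywhere, not merely away from $E$, so for any collection of components the induced map on strata is a homeomorphism of underlying spaces and hence a bijection on irreducible components. With that substitution your stratum bijection is sound.

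A second, smaller imprecision is in your final paragraph: $Z\times_X Z$ is not ``the same construction'' (the cyclic cover) applied to $Z'\times_{X'}Z'$. In the local model with $Z'=X'=\Spec A$ one gets $Z\times_X Z\cong Z\times\mu_r$, the action groupoid, not $Z$ itself. This extra $\mu_r$ factor is harmless, but it is exactly what the paper's proof checks explicitly: the two projections $Z\times\mu_r \double Z$ differ by the $\mu_r$-action $y\mapsto t^{-1}y$, which preserves each divisor component setwise, so the two induced maps of unordered $\Delta$-complexes are \emph{equal} and the coequalizer is just $\Delta^{un}(Z,D_Z)$. You should verify this triviality of the groupoid action on strata rather than appeal to functoriality of the root construction alone, since a bijection on coequalizers is precisely what would fail if the two projections identified divisor components differently.
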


\begin{proof}
An \'etale  neighborhood of a point $p\in D \subset X$ has an \'etale map to $0\in (\bA^n , x_1 \cdots x_m = 0)$ where $m \leq n$. Let $D_i$ be the component associated to $x_i = 0$ and consider the r-th root stack $(\bA^n)_{r,D_1}$ of the divisor $D_1$. Then \cite[ Example 2.4.1]{cadman} tells us that $(\bA^n)_{r,D_1}$  is isomorphic to 

$$[Spec(A[y]/(y^r - x_1)) / \mu_r ]$$

where $A =k[x_1, \ldots, x_n]$ and $\mu_r$ acts on y by $t \cdot y = t^{-1}y$  and acts trivially on all elements of $A$. Since we have an \'etale surjection  $Spec(k[x_1, \ldots, x_n, y]/(y^r - x_1) \to (\bA^n)_{r,D_1}$, we may form the associated presentation

$$Spec(A[y]/(y^r - x_1)) \times_{ (\bA^n)_{r,D_1}} Spec(A[y]/(y^r - x_1)) \double Spec(A[y]/(y^r - x_1))$$

The relations in the above recognized may be recognized as $$Spec(A[y]/(y^r - x_1])$$ where one projection is the identity and the other sends $y$ to $t^{-1}y$. This is a smooth affine variety.  Note that the pullback of the divisor $\{ x_1 \cdots x_m = 0 \}$ to $(A[y]/(y^r - x_1))$ is $\{ y^r x_2 \cdots x_m = 0 \}$ and this topologically the same as the reduced divisor $\{ y x_2 \cdots x_m = 0 \}$. In particular, no distinct divisors are identified under the projections. From this, it follows that the two induced morphisms of unordered $\Delta$-complexes are equal and thus the boundary complex of $((\bA^n)_{r,D_1},f^{-1}(D))$ is equal to the boundary complex of $(\bA^n , x_1 \cdots x_m = 0)$.

Since the operation of forming a root stack is local in the \'etale topology and our local computation shows that procedure only changes the multiplicity of a divisor, we see that the construction does not introduce any new divisors nor modify the incidence the relations between them.

\end{proof}

\section{The Construction}

To prove the main theorem, we proceed by a sequence of reductions.

\subsection{\sc Step I -  Reduction to the case of SNC divisors}

Given an arbitrary pair $(X,D)$ consisting of a smooth Deligne-Mumford stack and a normal crossings divisor $D$,  we may apply Lemma \ref{redsnc} to find a pair $(X',D')$ consisting of a smooth Deligne-Mumford stack and a simple normal crossings divisor $D'$ together with a proper birational morphism $X' \to X$ that is an isomorphism over $X-D$. As in Lemma \ref{redsnc}, this will induce a simple homotopy equivalence of generalized boundary complexes $\Delta^{gen}(X,D)$ and $\Delta^{gen}(X',D')$.

\subsection{\sc Step II -  Reduction to the case of proper morphisms}

Let $(X_1,D_1)$ and $(X_2,D_2)$ be pairs consisting of a smooth Deligne-Mumford stack and a simple normal crossings divisor. Additionally, we shall suppose that $X_1 - D_1$ is isomorphic to $X_2 - D_2$ and we shall thus regard $X_1$ and $X_2$ as two alternative compactifications of an open substack $U$.

\begin{lemma}
\label{GraphConstruct}
Let $ X_1$, $X_2$, and $U$  be as above, then there exists a smooth and proper algebraic stack $Z$  and a dense open embedding $U \subset Z$ and morphisms $\phi_i:Z \to X_i$  respecting the open set $U$.
\end{lemma}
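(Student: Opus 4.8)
The plan is to realize $Z$ as a resolution of the graph of the birational map $X_1 \dashrightarrow X_2$. Write $j_i \colon U \hookrightarrow X_i$ for the two open immersions identifying $U$ with $X_i - D_i$, and consider the morphism $f = (j_1, j_2) \colon U \to X_1 \times X_2$ into the (smooth, proper, Deligne--Mumford) product. Since $\mathrm{pr}_1 \circ f = j_1$ is a monomorphism, $f$ is itself a monomorphism (if a composite $g \circ f$ is mono, then so is $f$). Factoring $f$ through the open substack $U \times X_2 = (X_1 - D_1) \times X_2 \subset X_1 \times X_2$, the remaining map is the graph $\Gamma_{j_2} \colon U \to U \times X_2$ of $j_2$. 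The first thing I would verify is that $\Gamma_{j_2}$ is a \emph{closed} immersion: it is the base change of the diagonal $\Delta_{X_2} \colon X_2 \to X_2 \times X_2$ along $j_2 \times \mathrm{id}$, hence proper and representable because $X_2$ is a separated Deligne--Mumford stack, and it is a monomorphism by the same cancellation argument applied to the open immersion $U \times X_2 \hookrightarrow X_1 \times X_2$. A proper representable monomorphism is a closed immersion (base change to a scheme gives a proper monomorphism of algebraic spaces, which is a closed immersion).

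Granting this, $f$ is a locally closed immersion identifying $U$ with a closed substack of the open substack $U \times X_2$. Let $Z_0 = \overline{U}$ be the closure of $f(U)$, a reduced closed substack of $X_1 \times X_2$ and therefore a proper Deligne--Mumford stack. Because $U$ is closed in $U \times X_2$, we get $Z_0 \cap (U \times X_2) = U$, so $U$ is simultaneously dense (by construction of the closure) and open in $Z_0$, and the two projections $p_i \colon Z_0 \to X_i$ restrict to $j_i$ on $U$.

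Finally I would pass from $Z_0$ to a smooth model by functorial resolution of singularities, which is available in characteristic $0$ and, being \'etale-local and compatible with smooth morphisms, descends from an \'etale presentation to the Deligne--Mumford stack $Z_0$. This yields a proper birational $\pi \colon Z \to Z_0$ with $Z$ smooth and proper and $\pi$ an isomorphism over the smooth locus of $Z_0$. Since $U$ is a smooth, dense, open substack of $Z_0$, the map $\pi$ is an isomorphism over $U$, so $U$ embeds as a dense open substack of $Z$. Setting $\phi_i = p_i \circ \pi \colon Z \to X_i$ gives the required morphisms respecting $U$ (and, if one wants an snc boundary for later use, embedded resolution can be invoked to make $Z - U$ a normal crossings divisor). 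The main obstacle is the closed-immersion claim for $\Gamma_{j_2}$, i.e.\ checking that the naive graph closure behaves as in the scheme case despite the nontrivial inertia of the $X_i$, where $\Delta_{X_2}$ is only finite and unramified rather than a closed immersion; once that is settled, properness of $Z$ and the open-dense properties of $U$ are formal, and resolution of singularities enters as a black box.
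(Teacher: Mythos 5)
Your proposal is correct and follows essentially the same route as the paper: form the closure of the graph of $X_1 \dashrightarrow X_2$ inside $X_1 \times X_2$, then apply functorial resolution of singularities (the paper cites Temkin's resolution theorem for stacks directly, rather than descending resolution through an \'etale presentation). The extra work you do to show the graph $U \to X_1 \times X_2$ is a locally closed immersion despite the nontrivial inertia of $X_2$ — a proper representable monomorphism is a closed immersion — is a detail the paper's one-line proof leaves implicit, and it is a worthwhile addition.
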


\begin{proof}
Let $C \subset X_1 \times X_2 $ be the closure of the graph of the rational map  $X_1 \dashrightarrow X_2$. Then it suffices to find a smooth and proper stack Z together with a morphism $Z \to C$  that is an isomorphism over $U$ and such that $Z - U$ is a simple normal crossings divisor. For this, we simply cite the functorial resolution theorem of Temkin \cite[Theorem 5.1.1]{Temkin}.
\end{proof}

\begin{corollary}
\label{MorphismReduction}
If the morphisms $\phi_i$ above induce simple homotopy equivalences of  boundary complexes, then the boundary complexes of $X_1$ and $X_2$ are also simple homotopy equivalent
\end{corollary}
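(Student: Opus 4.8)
The plan is simply to invoke transitivity of the simple homotopy equivalence relation, using the stack $Z$ produced by Lemma \ref{GraphConstruct} as a common roof dominating both compactifications. First I would record that, by the construction in that lemma, $Z$ is a smooth proper Deligne-Mumford stack whose boundary $D_Z = Z - U$ is a simple normal crossings divisor, so that the boundary complex $\Delta(Z, D_Z)$ is defined; moreover each $\phi_i: Z \to X_i$ is a proper birational morphism restricting to an isomorphism on $U$, and hence induces a morphism of boundary complexes $\Delta(\phi_i): \Delta(Z, D_Z) \to \Delta(X_i, D_i)$. The hypothesis of the corollary is exactly that both of these induced maps are simple homotopy equivalences.

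The one point worth stating explicitly is that simple homotopy equivalence is an equivalence relation. By definition two complexes are simple homotopy equivalent when they are connected by a finite sequence of elementary collapses and expansions; since each expansion is the inverse of a collapse the relation is symmetric, and concatenating two such sequences shows it is transitive. Applying this to the two given equivalences we obtain the chain
$$\Delta(X_1, D_1) \cong \Delta(Z, D_Z) \cong \Delta(X_2, D_2),$$
from which we conclude that $\Delta(X_1, D_1)$ and $\Delta(X_2, D_2)$ are simple homotopy equivalent, as asserted.

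I do not expect any genuine obstacle here: once the two hypotheses are granted, the content of the corollary is entirely formal, the only input being the transitivity just noted. All of the substantive work, namely verifying that the morphisms $\phi_i$ actually do induce simple homotopy equivalences, is carried out separately through Theorem \ref{main1} together with the invariance results of the preceding section (Lemmas \ref{centerdescent} and \ref{rootdescent}).
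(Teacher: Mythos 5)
Your proof is correct and coincides with the paper's treatment: the paper states this corollary without any proof, regarding it as the immediate formal consequence (via symmetry and transitivity of simple homotopy equivalence) of having the roof $Z$ from Lemma \ref{GraphConstruct} with both induced maps being equivalences. Your explicit spelling-out of that transitivity argument is exactly the intended content, and no further justification is required.
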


With the two results above in hand, we will now assume the existence of a morphism $(X_1,D_1) \to (X_2,D_2)$ for the remainder of this section. 

\subsection{\sc Step III -  Reduction to the case of proper representable morphisms:}

In the previous section, we obtained a morphism $(X_1,D_1) \to (X_2,D_2)$. Our next goal is to reduce to the case where this morphism is actually representable using Corollary \ref{opdescent}. 

To do this, choose an \'etale morphism $Y_2 \to X_2$ where $Y_2$ is a scheme and let $Y_1 = X_1 \times _{X_2} Y_2$. $Y_2$ will not generally be a scheme, but the failure of the morphism $Y_1 \to Y_2$ to be representable is directly tied to the failure of the morphism  $X_1 \to X_2$ to be representable. We will be in good shape if we can find a sequence of simple stacky modifications that destackify $Y_1$ \textit{and} further descend to operations on $X_1$, which destackify it \textit{relative} to $X_2$. Indeed, we actually have an inertia stable groupoid presentation $(Y_1 \times_{X_1} Y_1 \double Y_1,c)$ of the stack $X_1$ by Proposition \ref{stablepullback} and Example \ref{stableexample}. Note that the presentation is defined over $Y_2$ even though $X_1$ is not. 

\begin{theorem}
\label{hatthm}
In the situation above, the following diagram exists:
\begin{equation}
\begin{tikzcd}[column sep=small]
& \hat{X_1} \arrow[dl] \arrow[dr] & \\
X_1 \arrow[dr] & & \hat{X}_{1,r. cs} \arrow[dl] \\
& X_2 
\end{tikzcd}
\end{equation}
The morphism $\hat{X_1} \to X_1$ is a composition of root stacks and smooth stacky blow-ups $X_n \to X_{n-1}$  along irreducible boundary divisors and such that if $D_{1,n}$ denotes the total transform of $D_1$ to $X_{n-1}$, then the center of the blow-up lies in $D_{1,n}$ and  intersects the strata of $D_{1,n}$ with normal crossings. The morphism $\hat{X_1} \to \hat{X}_{1,r. cs}$ is a composition of root stacks. The morphism are all isomorphisms over the fixed open set $U = X_1 - D_1$, and they induce a simple homotopy equivialence $$\Delta(X_1,D_1) \cong \Delta(\hat{X}_{1,r. cs},\hat{D_1}).$$
\end{theorem}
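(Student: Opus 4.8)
The plan is to construct $\hat{X}_1$ by applying Bergh's destackification to the stack $Y_1$ appearing in the groupoid presentation, while simultaneously verifying that every operation descends to $X_1$ and preserves the boundary complex's simple homotopy type. The setup gives us an inertia stable groupoid presentation $(Y_1\times_{X_1}Y_1\rightrightarrows Y_1,c)$ of $X_1$ defined over the scheme $Y_2$, where $Y_1$ is a Deligne-Mumford stack whose generic locus (the pullback of $U$) is a scheme. The central tool is Corollary \ref{opdescent}: because the groupoid is inertia stable and the pullback of $X_1-D_1$ is a scheme, Bergh's algorithm applied to $Y_1$ decomposes into a sequence of operations---smooth stacky blow-ups along centers with normal crossings, root constructions along boundary components, and finally a factorization of the coarse map $Y_1\to (Y_1)_{cs}$ into root stacks---each of which descends to an operation on $X_1$ and pulls back to the original.

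\textbf{First I would} run the descent. Applying Bergh's theorem (Theorem following Definition \ref{ssbu}) to $Y_1$ produces a smooth stacky blow-up sequence terminating in a destackification $Y_1\to (Y_1)_{cs}$, and since $Y_1$ is generically a scheme the coarse map factors purely through root stacks (no gerbes). Corollary \ref{opdescent} then descends this entire sequence to a chain $X_1=\hat{X}_{0}\leftarrow \cdots \leftarrow \hat{X}_1$ of blow-ups and root constructions on $X_1$, together with a relative coarse map $\hat{X}_1\to \hat{X}_{1,r.cs}$ built from Theorem \ref{rcmthm}, where $\hat{X}_{1,r.cs}\to X_2$ is representable. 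The relative coarse moduli space exists precisely because $Y_1\to (Y_1)_{cs}$ is the pullback of $\hat{X}_1\to\hat{X}_{1,r.cs}$; representability over $X_2$ is exactly what makes the later application of the Abramovich--Temkin weak factorization possible. I would record that each descended blow-up has center lying in (and meeting with normal crossings) the total transform of $D_1$, since this property holds upstairs in Bergh's algorithm and is preserved under the étale pullback $Y_1\to X_1$.

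\textbf{Next I would} establish the simple homotopy equivalence. Each operation in the descended sequence is either a smooth stacky blow-up along a center with normal crossings or a root construction along a boundary divisor, so Lemma \ref{centerdescent} and Lemma \ref{rootdescent} apply respectively: the former gives a simple homotopy equivalence of boundary complexes under blow-up, the latter gives a bijection (hence trivially a simple homotopy equivalence) under root constructions. Composing these across the whole sequence $X_1\to\cdots\to\hat{X}_1\to\hat{X}_{1,r.cs}$ yields the desired
\[
\Delta(X_1,D_1)\cong\Delta(\hat{X}_{1,r.cs},\hat{D}_1),
\]
where $\hat{D}_1$ is the total transform of $D_1$. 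That all morphisms are isomorphisms over $U=X_1-D_1$ follows because Bergh's operations act only on the boundary (root constructions are taken along components of $f^{-1}(D)$, and blow-up centers lie in the total transform of the boundary), a property preserved under descent.

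\textbf{The hard part will be} bookkeeping the descent of the coarse map into the relative coarse moduli space cleanly. The subtlety is that $X_1$ itself is not defined over $Y_2$---only its groupoid presentation is---so one must check that the relative coarse moduli space $\hat{X}_{1,r.cs}$ produced by Theorem \ref{rcmthm} is genuinely the descent of $(Y_1)_{cs}$ and that the resulting map to $X_2$ is representable. The compatibility hinges on the 2-cartesian square \eqref{rcm2} and on Lemma \ref{avref}, which guarantee that forming coarse spaces commutes with the étale base change defining the groupoid. Verifying that the inertia stable condition is maintained throughout the blow-up sequence---so that each successive operation remains descendable---is where the functoriality of Bergh's algorithm with respect to smooth, stabilizer-preserving morphisms, combined with Proposition \ref{stablepullback}, does the essential work; I would treat this as the technical crux and handle it operation-by-operation rather than all at once.
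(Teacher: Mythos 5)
Your proposal is correct and follows essentially the same route as the paper: the paper's proof is a one-line citation of exactly the ingredients you assemble, namely the inertia stability of $(Y_1\times_{X_1}Y_1\rightrightarrows Y_1,c)$ (via Proposition \ref{stablepullback} and Example \ref{stableexample}), Corollary \ref{opdescent} to descend Bergh's operations (including the relative coarse moduli step via Theorem \ref{rcmthm}), and Lemmas \ref{centerdescent} and \ref{rootdescent} for the simple homotopy equivalence. Your more detailed treatment of the descent bookkeeping and the no-gerbes observation (since $Y_1$ is generically a scheme) simply makes explicit what the paper delegates to the proof of Corollary \ref{opdescent}.
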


\begin{proof}
Since the groupoid $(Y_1 \times_{X_1} Y_1 \double Y_1,c)$ is inertia stable by remarks preceding the theorem, this is Corollary \ref{opdescent} combined with Lemma \ref{rootdescent} and Lemma \ref{centerdescent}.
\end{proof}

\subsection{\sc Step IV -  Reduction to the case of projective representable morphisms:}

To apply the functorial weak factorization theorem of Abramovich and Temkin  \cite{AT2}, we must reduce further to the case of representable projective morphisms between smooth Deligne-Mumford stacks. For this, we will need the following result.

\begin{theorem}
Let $f: X \to Y$ be a proper representable and birational morphism of smooth Deligne-Mumford stacks. If $f$ is an isomorphism on an open subset $U \subset X$, then there exists a smooth Deligne-Mumford stack $Z$ and a pair of \emph{projective} birational morphisms $g: Z \to X$ and $h: Z \to Y$ such that both $g$ and $h$ are isomorphisms on $g^{-1}(U)$ and $Z - U$ is a simple normal crossings divisor.
\label{eprop}
\end{theorem}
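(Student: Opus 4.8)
The plan is to dominate $f$ by a stack that is projective over \emph{both} $X$ and $Y$, and then resolve. The point of the theorem is exactly to remove the obstruction to applying the functorial weak factorization theorem of \cite{AT2}: that result requires the morphism to be a blow-up, hence projective, whereas $f$ is only assumed proper. The auxiliary stack $Z$ will repair this.

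First I would produce an intermediate stack $W$ that is projective and birational over both $X$ and $Y$. Since $f$ is representable, pulling back along an \'etale atlas $Y_0 \to Y$ from a scheme yields a proper birational morphism of algebraic spaces $X \times_Y Y_0 \to Y_0$, and I can apply the flattening/Chow domination of proper birational morphisms for algebraic spaces \cite{stacks} to dominate it by a $f(U)$-admissible blow-up. The center is an ideal sheaf supported in the non-isomorphism locus, hence away from $f(U)$; its functoriality under the \'etale projections of the groupoid presenting $Y$ lets it descend to an ideal $\cI$ on $Y$ itself, exactly as in Lemma \ref{closedsub} and the proof of Corollary \ref{opdescent}. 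Writing $W = \mathrm{Bl}_{\cI}(Y)$ for the descended blow-up, the morphism $W \to Y$ is projective and birational, and since the rational map $W \dashrightarrow X$ is in fact a morphism (the blow-up was chosen to dominate $X$), the separatedness of $f\colon X \to Y$ forces the factoring map $W \to X$ to be projective as well. Because $\cI$ is cosupported away from $f(U)$ and $f$ is an isomorphism over $U$, both $W \to X$ and $W \to Y$ are isomorphisms over $U$.

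Next I would apply Temkin's functorial embedded resolution \cite[Theorem 5.1.1]{Temkin} to the pair $(W, W - U)$, precisely as in the proof of Lemma \ref{GraphConstruct}. This produces a smooth Deligne-Mumford stack $Z$ with a projective birational morphism $Z \to W$ that is an isomorphism over the already-smooth open locus containing $U$ and for which $Z - U$ is a simple normal crossings divisor. Setting $g\colon Z \to W \to X$ and $h\colon Z \to W \to Y$, each is a composition of projective birational morphisms and is therefore itself projective and birational; both are isomorphisms over $U$, i.e.\ on $g^{-1}(U)$, and $Z - U$ is snc, as required. The main obstacle is the first step: promoting the classical flattening/Chow statement to the representable stacky setting and verifying that the dominating blow-up can be chosen with center disjoint from $U$ and compatible with descent through the presenting groupoid. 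Once $W$ is in hand, the resolution step and the bookkeeping that both composites remain projective are routine.
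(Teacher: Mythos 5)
Your overall architecture (dominate $f$ by a stack projective over both $X$ and $Y$, then resolve via \cite[Theorem 5.1.1]{Temkin}) matches the intent of the paper's proof, and your closing steps are sound: the graph argument showing that $W \to X$ is projective once $W \to Y$ is projective and $f$ is separated, and the composition with Temkin's resolution, are both fine. The genuine gap is in the step you yourself flag as the main obstacle: descending the flattening/Chow blow-up ideal from the atlas $Y_0$ to $Y$. The domination-by-blow-up theorem for schemes and algebraic spaces in \cite{stacks} is a pure existence statement; the $U$-admissible ideal it produces is not canonical and is not known to commute with \'etale base change. Consequently, if $\cI_0$ is the ideal produced on $Y_0$, there is no reason that its two pullbacks to $Y_1 = Y_0 \times_Y Y_0$ coincide as subsheaves of $\cO_{Y_1}$, which is exactly what descent requires. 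The appeal to Lemma \ref{closedsub} and Corollary \ref{opdescent} does not carry over: those arguments work only because Bergh's destackification \cite{Bergh} is \emph{functorial} with respect to smooth, inertia-preserving morphisms, a property deliberately built into that algorithm and precisely what makes descent through the groupoid possible. Raynaud--Gruson-type flatification has no such functoriality, so this step is not a routine verification but an unsupported (and, as far as is known, unavailable) strengthening of the cited result.

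The paper circumvents this problem by never descending anything. It passes to the coarse moduli spaces $X_{cs}$ and $Y_{cs}$ given by Keel--Mori (Theorem \ref{keelmori}), applies Hironaka's version of Chow's lemma \cite[Lemma 1.3.1]{AKMW} down there, where one deals with honest spaces and no groupoid compatibility is needed, and then pulls the resulting $W$ back up to the stacks. Pullback, unlike descent, is unproblematic; the only delicate point is that the pulled-back morphisms remain projective, and this is where the paper uses the key observation that the natural map $q: X \to Y \times_{Y_{cs}} X_{cs}$ is representable, proper, and quasi-finite, hence finite, so that base change along $q$ preserves projectivity. To repair your argument you would either need a functorial (\'etale-locally compatible) flatification theorem, which you cannot cite off the shelf, or you should replace your descent step with this coarse-space detour.
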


\begin{proof}

We construct the following diagram: 
\begin{equation}
\begin{tikzcd}
W^{(2)} \arrow [dr] \arrow[rr] \arrow[ddrr, bend right= 30]  \arrow[dddr, bend right = 30]  & & W^{(1)} \arrow[rr] \arrow{dd}[near start]{g''} & & W \arrow[dl,"g'"] \arrow[dddl, bend left =30,"h'"]  \\
& X\arrow[dd, crossing over] \arrow[dr, "q"] \arrow[rr] && X_{cs} \arrow[dd,crossing over] \\
& & Y \times_{Y_{cs}} X_{cs}  \arrow[ur] \arrow[dr] \arrow[dl]\\
& Y \arrow[rr] && Y_{cs} \\
\end{tikzcd}
\end{equation}

Note that although $Y \times_{Y_{cs}} X_{cs}$ is not isomorphic to $X$, there is a natural morphism $q: X \to Y \times_{Y_{cs}} X_{cs}$. Since the map $X \to Y$ is representable, it follows that $q$ is representable. By the Keel-Mori Theorem \ref{keelmori}, the arrows $X \to X_{cs}$ and $Y \to Y_{cs}$ are proper and quasi-finite. It follows that $Y \times_{Y_{cs}}  X_{cs} \to X_{cs}$ is proper, and from \cite[Tag: 0CPT]{stacks} we may conclude that $q$ is proper. Likewise, since both $X \to X_{cs}$ and  $Y \times_{Y_{cs}}  X_{cs} \to X_{cs}$ are quasi-finite, the morphism $q$ is quasi-finite. Since $q$ is quasi-finite, proper, and representable, it is in fact a finite morphism.

Let $U'$ be the image of the open subset $U \subset X$ in $X_{cs}$. The map $f_{cs}$ is an isomorphism on $U'$ since $U' \times_{X_{cs}} X$ is f isomorphic to $U_{cs}$   \cite[Theorem 1.1]{conrad2}. By applying Hironaka's version of Chow's lemma \cite[Lemma 1.3.1]{AKMW}\cite{MR0393556}, we may find a smooth variety $W$ and a pair of projective birational morphisms $g': W \to X_{cs}$ and $h': W \to Y_{cs}$ such that $g'$ and $h'$ are isomorphisms over $g'^{-1}(U_{cs})$. This data can be pulled back along the morphism $Y \to Y_{cs}$  to give a pair of projective representable morphisms  $g'':  W^{(1)} \to Y \times_{Y_{cs}}  X_{cs}$ and $h'': W^{(1)} \to Y$ isomorphic over $q^{-1}(U)$ where $W^{(1)}$ is defined to be $Y \times_{Y_{cs}} W$

Now, we set $W^{(2)} = W^{(1)} \times_{ Y \times_{Y_{cs}}  X} X$. The natural morphism $W^{(2)} \to X$ is projective and representable as it is the pullback of $g''$ and the natural morphism $W^{(2)} \to W^{(1)}$ is a \textit{projective} and representable as it is the pullback of $q$. These maps are all isomorphisms on the appropriate pullback of $U$. Thus, we have obtained a Deligne-Mumford stack $W^{(2)}$ equipped with birational \textit{projective} representable morphisms to both $X$ and $Y$. We may apply the stack desingularization theorem of Temkin \cite{Temkin} to obtain our desired stack $Z$.
\end{proof}

\subsection{\sc Step V -  Proof of the main theorem}

\begin{proof}
By Corollary \ref{MorphismReduction}, we may assume that there is a proper morphism $(X_1,D_1) \to (X_2,D_2)$ that is an isomorphism over $U$. By Theorem \ref{hatthm}, we may assume that $(X_1,D_1) \to (X_2,D_2)$ is proper and representable. By Theorem \ref{eprop}, we may find a pair $(Z,D)$ where $Z$ is a smooth Deligne-Mumford stack and $D$ is a snc divisor, and a diagram:
\begin{equation}
\begin{tikzcd}[column sep=small]
& (Z,D) \arrow[dl] \arrow[dr] & \\
(X_1,D_1) \arrow{rr} & & (X_2,D_2)
\end{tikzcd}
\end{equation}
where the downwards arrows are projective representable morphisms. Theorem \ref{main1} follows by applying \cite[Theorem 1.4.1]{AT2} and Remark \ref{keyremark} to the diagonal arrows. Theorem \ref{main2} follows from this and the results of Section 4.
\end{proof}

\appendix
\section{Cone and $\Delta$-complexes}
\label{appendix:a}

\subsection{Generalized boundary complexes} 
We recall here that a $\Delta$-complex is a functor $F$ from the category $\Delta_{inj}$ to the category of $Sets$. Here, $\Delta_{inj}$ is the category with one object $[p]$ for each integer $p$, and whose morphisms $[p] \to [q]$ are all injective order preserving maps $\{0,1,\ldots,p\}\to \{0,1,\ldots,q\}$. We shall say that the $\Delta$-complex $F$ is \textit{finite} if $F([n])$ is a finite set for each $[n]$.

\begin{adef}
Let $X$ and $Y$ be $\Delta$-complexes and set $X\cong_{simple}Y$ if $X$ and $Y$ are related by a collapse. Here, a collapse is consists of two faces $\sigma \in X$ and $\tau \in X$ such that $\tau \subset \sigma$ and such that $\tau$ is a maximal face of $X$ and no other maximal face contains $\sigma$. We say that $X$ collapses to $Y$ if $Y$ is isomorphic to the subcomplex $X' \subset X$ obtained by removing all faces $\gamma$ where $\tau \subseteq \gamma \subseteq \sigma$. We shall call the data $(\sigma,\tau)$ satsifying the above properties a \textit{classical collapse datum}. The equivalence relation generated by $\cong_{simple}$ is known as \textit{simple homotopy equivalence} \cite{MR0035437}.
\end{adef}

\begin{adef}\cite[Section 3.1]{CGP}
A \textit{generalized $\Delta$-complex} is a presheaf $F: I^{op} \to Set$ on the category $I$ where $Objects(I)=Objects(\Delta_{inj})$,  but the morphisms $[p]\to[q]$ correspond to all possible injective maps  $\{0,1,\ldots,p\}\to \{0,1,\ldots,q\}$. If the action of the symmetric group $S_{p+1}$ on $F([p],[p])$ is free for all $p$, then we say that $F$ is an \textit{unordered $\Delta$-complex} rather than a generalized $\Delta$-complex. In either case, we say that $F$ is finite if $F([n])$ is a finite set for each $[n]$.
\end{adef}

Geometrically, generalized $\Delta$-complexes are analagous to spaces built from quotients of the sphere $S^n / E$ where $E \subset S_n$ is a subgroup of the symmetric group $S_n$.  For the purposes of this paper, we must generalize the notion of simple homotopy equivalence to the category of generalized $\Delta$-complexes. Note, as in \cite[Section 3.1]{CGP}, that every $\Delta$-complex $X$ gives rise to an unordered $\Delta$-complex $X_{un}$ by forgetting the ordering. We also note that the definition of simple homotopy equivalence extends immediately to the category of unordered $\Delta$-complexes as it does not take into account the structure of the ordering.

\subsection{Generalized cone complexes}

In this section, we recall the equivialence between \textit{generalized boundary complexes} and a special class of \textit{generalized cone complexes}. We use this equivalence to translate existing barycentric subdivision algorithms (Lemma \ref{comlem}) into our setting.

\begin{adef}\cite[Section 2.6]{ACP}
A \textit{generalized cone complex} is a topological space $X$ together with a category $C$ and a functor $F:C \to Con$ such that $X$ arises as the colimit $colim(r\circ F)$. Here $Con$ is the category of Cones together with face morphisms and $r: Con \to Top$ is the forgetful functor.
\end{adef}

\begin{adef}\cite[Section 3.4]{CGP}
A generalized cone complex is \textit{smooth} if it may be presented as the colimit of standard orthants $\mathbb{R}_{\geq 0} ^m$ together with face morphisms.
\end{adef}

The connection between generalized cone complexes and generalized $\Delta$-complexes is elaborated on in  \cite[Section 3.4]{CGP}:

\begin{alem}
\label{equivcat}
\cite[Section 3.4]{CGP} There is an equivalence of categories between generalized $\Delta$-complexes with a unique vertiex and the category $SGC$ whose objects are smooth generalized cone complexes with a unique vertex and whose morphisms are face morphisms between such complexes.
\end{alem}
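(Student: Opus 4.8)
The plan is to exhibit an explicit pair of quasi-inverse functors, built by \emph{coning}, and to reduce the whole statement to a comparison of the underlying ``cell categories''. First I would make the coning functor precise at the level of cells. Send the object $[p]$ of $I$ to the standard orthant $\mathbb{R}_{\geq 0}^{p+1}$, regarded as the cone over the standard $p$-simplex, so that its $p+1$ generating rays correspond bijectively to the vertices $\{0,\dots,p\}$. An injective map $\{0,\dots,p\}\to\{0,\dots,q\}$ in $I$ then determines the face morphism $\mathbb{R}_{\geq 0}^{p+1}\to\mathbb{R}_{\geq 0}^{q+1}$ carrying the $i$-th ray to the image ray. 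The essential observation is that this is a \emph{bijection on morphism sets}: a face morphism of standard orthants is an isomorphism onto a coordinate face, hence is determined precisely by an injective, but not necessarily order-preserving, map of generating rays. Thus coning identifies $I$ with the category $\mathcal{O}$ of standard orthants and their face morphisms, and it is exactly the freedom to permute rays (equivalently, to use non-order-preserving injections, as opposed to the order-preserving ones allowed in $\Delta_{inj}$) that accounts for the word \emph{generalized} and matches the symmetric-group symmetries of the cones.

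Next I would promote this cell-level equivalence to the glued objects. A generalized $\Delta$-complex $F\colon I^{op}\to Set$ is canonically the colimit of its representables indexed by its category of elements $\int F$, so composing $\int F\to I$ with the coning functor $I\xrightarrow{\sim}\mathcal{O}\subset Con$ and taking the colimit $colim(r\circ(-))$ in $Top$ produces a smooth generalized cone complex $\Phi(F)$. Under coning the unique $0$-simplex guaranteed by the unique-vertex hypothesis maps to a single ray ($1$-dimensional cone) in $\Phi(F)$, which is precisely the unique-vertex condition on the cone side, the shared apex being a separate, automatically unique, cone point. In the reverse direction, a smooth generalized cone complex with unique vertex is by definition presented as $colim(G\colon C\to Con)$ with image in standard orthants; replacing $G$ by the induced diagram $C\to\mathcal{O}\simeq I$, composing with the Yoneda embedding $I\to \mathrm{PSh}(I)$, and taking the colimit of the resulting representable presheaves yields a presheaf $\Psi(X)$ on $I$. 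I would check that $\Psi$ is independent of the chosen presentation by passing to the canonical minimal presentation indexed by the poset of cones of $X$, which is available because all face morphisms are monomorphisms.

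It then remains to verify that $\Phi$ and $\Psi$ are quasi-inverse. The isomorphism $\Psi\Phi\cong\mathrm{id}$ is the formal statement that a presheaf is recovered from its category of elements. For $\Phi\Psi\cong\mathrm{id}$ and essential surjectivity, the colimit presentations reduce everything to the cell-level equivalence $I\simeq\mathcal{O}$ of the first step, together with the fact that geometric realization of a colimit of standard orthants glued along injective face morphisms recovers the combinatorial gluing data without spurious identifications. Here smoothness is what is used decisively: because each cone is a \emph{standard} orthant, no extra automorphisms or self-overlaps appear, and the realization is faithful.

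The step I expect to be the main obstacle is full faithfulness, i.e.\ showing that the face morphisms of cone complexes allowed in $SGC$ correspond \emph{exactly} to natural transformations of generalized $\Delta$-complexes. The difficulty is the bookkeeping of ray/vertex permutations: one must show that a map which is locally an isomorphism onto a face assembles into a globally well-defined morphism of presheaves, and conversely that every natural transformation is realized by such a face morphism. Carrying this out cleanly is exactly what forces one to work over the full morphism category $I$ of all injections rather than $\Delta_{inj}$, and to restrict to the unique-vertex, smooth setting, where the single distinguished ray and the standard-orthant structure rigidify the available face morphisms enough to make the correspondence a bijection.
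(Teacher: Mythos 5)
First, a point of comparison: the paper itself contains no proof of this lemma; it is quoted wholesale from \cite[Section 3.4]{CGP}. So the only meaningful comparison is with the cited source, and your overall architecture --- identify the cell category $I$ with the category $\mathcal{O}$ of standard orthants and face morphisms, cone off a presheaf by taking a colimit of orthants over its category of elements, and come back by extracting a presheaf from a presentation --- is exactly the strategy of that source. Your first step is correct and cleanly put: an integral face morphism $\mathbb{R}_{\geq 0}^{p+1}\to\mathbb{R}_{\geq 0}^{q+1}$ is precisely an injection of generating rays, not necessarily order-preserving, so $I\simeq\mathcal{O}$; granting that, the construction of $\Phi$ and the co-Yoneda argument for $\Psi\Phi\cong\mathrm{id}$ are fine.

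The genuine gap lies in the two claims you use to control $\Psi$ and $\Phi\Psi$. You propose to prove presentation-independence of $\Psi$ by ``passing to the canonical minimal presentation indexed by the poset of cones of $X$,'' and you later assert that smoothness guarantees ``no extra automorphisms or self-overlaps appear'' in the realization. Both claims are false, and they fail on exactly the objects that the word \emph{generalized} is there to include. Consider the folded quadrant $X=\mathbb{R}_{\geq 0}^2/S_2$: it is a smooth generalized cone complex (the colimit of the diagram consisting of $\mathbb{R}_{\geq 0}^2$ together with the coordinate swap, which is a face morphism), it equals $\Phi(F)$ for the generalized $\Delta$-complex $F$ with one vertex and one $1$-simplex carrying the \emph{non-free} $S_2$-action, and its maximal cell has a genuine self-overlap even though every cone is a standard orthant --- smoothness constrains the cones, not the gluing. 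The poset of cones of $X$ is the chain $(\mathrm{point})<(\mathrm{ray})<(\mathrm{folded\ quadrant})$, and the colimit of the corresponding poset diagram of orthants is the honest quadrant, not $X$; consequently $\Psi$ routed through that ``canonical'' presentation computes the wrong presheaf (the unordered edge on two vertices rather than the folded edge). As written, your argument proves the classical equivalence for ordinary cone complexes and unordered $\Delta$-complexes, and silently discards the automorphism data that distinguishes the generalized ones. The standard repair --- in effect what \cite{CGP} do --- is to define $\Psi$ intrinsically, $\Psi(X)([p])=\{\text{face morphisms }\mathbb{R}_{\geq 0}^{p+1}\to X\}$ with $I$ acting by precomposition, which is presentation-free by construction; the verification of $\Phi\Psi\cong\mathrm{id}$ must then engage with foldings rather than rule them out. (Your reading of ``unique vertex'' on the cone side as ``unique ray'' is a reasonable gloss of the paper's loose statement and is harmless; the hypothesis does no work in the argument on either side.)
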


These ideas allow us to pass from the topological world of generalized $\Delta$-complexes to the more toric world of $SGC$. In particular, it allows us to use the toric and toroidal of constructions of \cite[Section 3.4]{AT2}

\begin{alem} \cite[Section 3.4.1]{AT2}
\label{comlem}
\begin{enumerate}
\item \cite[Section 3.4.1]{AT2} \cite[Section 2.5]{ACP} The barycentric subdivisions $B(\Delta)$ of a generalized cone complex $\Delta$ is a projective subdivision obtained by a sequence of simultaneous star subdivisions. If $\Delta$ is nonsingular then the star subdivisions are smooth and the generalized cone complex $B(\Delta)$ is in fact a cone complex.
\item \cite[Section 3.4.1]{AT2}  \cite[Lemma 8.7]{AMR}The barycentric subdivisions $B(\Delta)$ of a smooth cone complex $\Delta$ is a projective subdivision obtained by a sequence of simultaneous smooth star subdivisions. The smooth cone complex $B(\Delta)$ is in fact isomorphic to a fan.
\end{enumerate}
\end{alem}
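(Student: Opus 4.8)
The plan is to treat both parts as a recall from \cite{AT2}, \cite{ACP}, and \cite{AMR}, isolating the three mechanisms that make them work: the realization of $B(\Delta)$ as an iterated star subdivision, the projectivity and smoothness bookkeeping at each star subdivision, and the rigidification that upgrades a generalized cone complex to a cone complex and a cone complex to a fan. Throughout I would work on the cone-complex side of the equivalence of Lemma \ref{equivcat}, so that the combinatorial claims can be checked cone by cone and then glued along the face maps of the diagram presenting $\Delta$.

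First I would recall that on a single cone $\sigma$ the barycentric subdivision $B(\sigma)$ has one cone for each flag $\tau_0 \subsetneq \tau_1 \subsetneq \cdots \subsetneq \tau_k$ of nonzero faces of $\sigma$, and that it is obtained by star subdividing at the barycentric rays of the faces of $\sigma$ in order of decreasing dimension. Performing this across all cones of a fixed dimension at once yields the ``sequence of simultaneous star subdivisions'' of the statement. Since a single star subdivision at a ray $\rho$ corresponds to a projective toric blow-up --- it is the domain-of-linearity decomposition of the convex piecewise-linear function supported on the star of $\rho$ --- and projective morphisms are closed under composition, the composite birational map $X_{B(\Delta)} \to X_\Delta$ is projective; this gives the projectivity assertion in both parts.

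Next I would handle smoothness. For a nonsingular cone $\sigma = \mathbb{R}_{\geq 0} v_1 + \cdots + \mathbb{R}_{\geq 0} v_d$ with $v_1, \ldots, v_d$ part of a lattice basis, the star subdivision at the barycentric ray through $v_1 + \cdots + v_d$ replaces $\sigma$ by the cones on $\{v_1, \ldots, \widehat{v_i}, \ldots, v_d, v_1 + \cdots + v_d\}$, and a one-line determinant computation shows that each such generating set is again part of a lattice basis. Iterating this observation through the star subdivisions above, every cone of $B(\Delta)$ is smooth whenever $\Delta$ is nonsingular, which is exactly the ``smooth star subdivisions'' and nonsingularity claims (cf.\ \cite[Lemma 8.7]{AMR}).

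The step I expect to be the main obstacle, and the one carrying the real content, is the rigidification. A generalized cone complex may glue a cone to itself along a nontrivial automorphism, and the point of part (1) is that barycentric subdivision destroys such self-identifications: a maximal cone of $B(\Delta)$ is indexed by a complete flag of faces, and any automorphism of $\sigma$ preserving such a flag is forced to be the identity, so the subdivided object has only trivial cone automorphisms and is therefore an honest cone complex. For part (2) the analogous and slightly stronger claim is that, once the automorphisms are trivial and the cones smooth, the complex embeds into a single ambient lattice --- one sends each barycentric ray to a distinguished basis vector and checks that distinct flags land in distinct cones --- so that $B(\Delta)$ is isomorphic to a genuine fan. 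Carrying out this last verification requires tracking precisely how the face automorphisms of the original cones act on flags, and that is where the argument must be made carefully rather than merely cited.
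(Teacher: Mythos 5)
The paper does not actually prove this lemma: it is stated purely as a recollection from the literature, with the citations \cite[Section 3.4.1]{AT2}, \cite[Section 2.5]{ACP}, and \cite[Lemma 8.7]{AMR} standing in for the argument, and no proof follows the statement. Your proposal therefore does something genuinely different --- it reconstructs the content behind those citations --- and the reconstruction is essentially the standard one: realization of $B(\Delta)$ by simultaneous star subdivisions at barycenters taken in decreasing dimension, projectivity via strictly convex piecewise-linear support functions and composition, smoothness of the star subdivision of a smooth cone by the lattice determinant computation, and rigidification via the description of cones of $B(\Delta)$ by flags of faces. The one place your sketch overclaims is the rigidification step: for a non-simplicial cone $\sigma$, an automorphism preserving a complete flag of faces need \emph{not} be the identity on $\sigma$ (take $\sigma$ the cone over a square and the flag through two adjacent rays; the swap of the remaining two rays preserves the flag). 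What is true, and what suffices, is that such an automorphism fixes the barycenter of each flag member and hence fixes the corresponding cone of $B(\Delta)$ pointwise, so no cone of $B(\Delta)$ is self-identified nontrivially; in the smooth (hence simplicial) setting of part (1)'s nonsingularity hypothesis and of part (2), your stronger claim does hold by the inductive ray-fixing argument. With that adjustment your argument is a correct proof of what the paper merely cites, and your final embedding-into-a-lattice step for part (2) --- sending the rays of $B(\Delta)$ to distinguished basis vectors, which is legitimate because after barycentric subdivision distinct cones have distinct ray sets --- is exactly the content of \cite[Lemma 8.7]{AMR}.
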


In the case where $\Delta$ is the generalized cone complex of a toroidal scheme, then it is known (see \cite[Page 14]{AT2} for a detailed discussion) that the simultaneous star subdivision operations in \ref{comlem} above may be realized as toroidal morphisms of toroidal schemes. In view of the above theorem, the following definition becomes natural:

\begin{adef}
\label{sht}
The \textit{simple homotopy type} of a generalized $\Delta$-complex $X$ is the simple homotopy type of the composition $B(B(GC(X)))$ where $GC$ denotes the generalized cone associated with $X$ and $B$ is the barycentric subdivision operator.
\end{adef}

\begin{alem}
\label{redsnc}
Assume (X,E) is a smooth Deligne-Mumford stack together with a normal crossings divisor $E$, then there exists a proper birational morphism $(X',E') \to (X,E)$ that is an isomorphism over $X-E$ and such that $E'$ is a simple normal crossings divisor. Moreover, the simple homotopy type of the unordered $\Delta$-complex $\Delta(X',E')$ is the same as the simple homotopy type of the generalized $\Delta$-complex $\Delta(X,E)$
\end{alem}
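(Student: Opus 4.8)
The plan is to realize the passage from a normal crossings to a simple normal crossings divisor as the geometric incarnation of a barycentric subdivision, and then to read off the comparison of simple homotopy types directly from Definition \ref{sht}.

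First I would regard the pair $(X,E)$ as a toroidal Deligne-Mumford stack, so that it acquires an associated smooth generalized cone complex $\Sigma = GC(\Delta(X,E))$ under the equivalence of Lemma \ref{equivcat}; the failure of $E$ to be simple normal crossings is precisely the failure of $\Sigma$ to be an honest (non-generalized) cone complex, i.e.\ the presence of cones glued to themselves along nontrivial automorphisms. I would then apply Lemma \ref{comlem}(1): since $\Sigma$ is nonsingular, its barycentric subdivision $B(\Sigma)$ is an honest smooth cone complex, obtained from $\Sigma$ by a sequence of simultaneous smooth star subdivisions.

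Next I would invoke the fact, recalled in the discussion following Lemma \ref{comlem}, that for a toroidal stack these simultaneous star subdivisions are realized by toroidal morphisms. Performing them yields a toroidal modification $\pi \colon X' \to X$ whose cone complex is $B(\Sigma)$. Because a subdivision of a cone complex induces a proper birational morphism that is an isomorphism over the open dense locus, $\pi$ is proper, birational, and an isomorphism over $X - E$; setting $E' = \pi^{-1}(E)_{\mathrm{red}}$ we have $GC(\Delta(X',E')) = B(\Sigma)$. Since $B(\Sigma)$ is now a genuine cone complex---every cone has trivial automorphism group, so no stratum is glued to itself---the divisor $E'$ has simple normal crossings, establishing the first assertion.

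For the statement on simple homotopy types I would compute both sides from Definition \ref{sht}. The left-hand side is the simple homotopy type of $B(B(\Sigma))$, while the right-hand side, using $GC(\Delta(X',E')) = B(\Sigma)$, is the simple homotopy type of $B(B(B(\Sigma)))$. These two complexes differ by exactly one further barycentric subdivision; since each star subdivision is the combinatorial shadow of a blow-up along a smooth stratum and is therefore a simple homotopy equivalence (this is the content of Stepanov's lemma used in Lemma \ref{centerdescent}), barycentric subdivision preserves simple homotopy type, and the two agree. The main obstacle I anticipate is the geometric realization step: carefully producing the toroidal morphism $\pi \colon X' \to X$ of \emph{stacks} realizing the barycentric subdivision, and verifying that $B(\Sigma)$ being an honest cone complex is equivalent to $E'$ being simple normal crossings rather than merely normal crossings. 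The homotopy-theoretic comparison, by contrast, is essentially forced by the shape of Definition \ref{sht} once invariance of simple homotopy type under star subdivision is in hand.
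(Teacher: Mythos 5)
Your overall strategy coincides with the paper's: realize the passage from normal crossings to simple normal crossings as the geometric incarnation of barycentric subdivision of the generalized cone complex, then compare simple homotopy types through Definition \ref{sht} using invariance of simple homotopy type under subdivision. However, there is a genuine gap at exactly the step you flag as your ``main obstacle,'' and that step is the actual content of the paper's proof. You invoke ``the fact\ldots that for a toroidal stack these simultaneous star subdivisions are realized by toroidal morphisms,'' but the discussion following Lemma \ref{comlem} (and the source it cites, \cite[Page 14]{AT2}) provides this only for toroidal \emph{schemes}. For a Deligne--Mumford stack, $\Sigma = GC(\Delta(X,E))$ is defined as a colimit over an \'etale presentation (Definition \ref{bcdef2}), so both the existence of a morphism of stacks $\pi \colon X' \to X$ realizing $B(\Sigma)$ and the identification $GC(\Delta(X',E')) = B(\Sigma)$ require proof. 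The paper supplies this by descent: it chooses a presentation $E_1 \double E_0$ of $(X,E)$, realizes the simultaneous star subdivisions as blow-ups on the charts, uses functoriality of barycentric subdivision with respect to surjective face morphisms to see that these operations are compatible with both projections and hence glue to a morphism of stacks, and concludes that $B$ commutes with the colimit, i.e.\ that $B(\Delta^{cone}(X))$ is the colimit of $B(\Delta^{cone}(E_1)) \double B(\Delta^{cone}(E_0))$. Without this descent argument, your $\pi$ and your equality $GC(\Delta(X',E')) = B(\Sigma)$ are unsupported assertions, and the homotopy comparison that follows has nothing to stand on.

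Two smaller discrepancies are worth recording. First, you perform one barycentric subdivision (landing on an honest cone complex), whereas the paper performs two, invoking Lemma \ref{comlem}(2) to land on a complex isomorphic to a fan and deducing that $E''$ is simple normal crossings; your implicit claim that ``honest cone complex'' already corresponds to ``snc divisor'' (equivalently, to an unordered $\Delta$-complex, where the symmetric group actions are free) is plausible and would yield a more economical modification, but it is asserted rather than proved, and the paper deliberately avoids having to prove it. Second, citing Stepanov's lemma for invariance of simple homotopy type under barycentric subdivision is backwards: Stepanov's lemma is a geometric statement about blow-ups of pairs, while the fact needed here is purely combinatorial, namely that star subdivisions factor functorially into collapses \cite[Page 14]{AT2}. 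The paper needs the functorial (presentation-compatible) form of this factorization because it builds the simple homotopy equivalence at the level of the groupoid $E_1 \double E_0$; in your formulation the classical, non-equivariant statement would suffice for the final comparison of $B(B(\Sigma))$ with $B(B(B(\Sigma)))$, but only once the realization step above has been legitimately established.
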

\begin{proof}
Note that if $(X,E)$ admits an \'etale presentation $E_1 \double E_0$, then the generalized $\Delta$-complex $\Delta(X)$ is just the homotopy type of $$B(B(GC(Colim(\Delta(E_1) \double \Delta(E_0)))))$$ 
Using Lemma \ref{equivcat}, we may instead work with generalized cone complexes $\Delta^{cone}(X)$, $\Delta^{cone}(E_0)$, and $\Delta^{cone}(E_1)$. We also note that the two morphisms between $\Delta^{cone}(E_1)$ and $\Delta^{cone}(E_0)$ are surjective face morphisms, as is the quotient morphism $\Delta^{cone}(E_0) \to \Delta^{cone}(X)$. Barycentric subdivision is functorial with respect to surjective face morphisms and it follows that $B(\Delta^{cone}(X))$ is the colimit of $$colim(B(\Delta^{cone}(E_1)) \double B(\Delta^{cone}(E_0)))$$

The barycentric subdivision operation may be realized as a sequence of star subdivisions on the generalized cone complex, and these  star subdivisions may be explicitly realized as the induced morphisms from a sequence of blow-up operations \cite[Page 14]{AT2}: $$X'=X_n \to X_{n-1} \to \ldots \to X_0 = X$$

The aforementioned star subdivisions may be factored functorially into collapses with respect to surjective face morphisms\cite[Page 14]{AT2}.   Further, the above sequence of blow-up operations induces a corresponding sequence of morphisms on the level of presentation $$(E^{n}_1 \double E^{n}_0) \to (E^{n-1}_1 \double E^{n-1}_0) \to \cdots (E_1 \double E_0)$$  
Since the star subdivisions corresponding to each individual morphism in the above sequence may be functorially factored into collapses (with respect to the projections), we obtain a simple homotopy equivalence between $$Colim(\Delta^{cone}(E_1) \double \Delta^{cone}(E_0))$$  and $$\Delta(X',E') = Colim(\Delta^{cone}(E^{n}_1,D_1)) \double \Delta^{cone}(E^{n}_0,D_0)) $$
$$= Colim(B(\Delta^{cone}(E_1,D_1))) \double B(\Delta^{cone}(E^{n}_0,D_0))$$ 

By \ref{comlem}, two applications of barycentric subdivision will yield a smooth Deligne-Mumford stack $X''$ together with a smooth fan complex, and in particular a simple normal crossings divisor. By construction, it is clear that $\Delta(X'',E'')$ has the same simple homotopy type as $\Delta(X)$.
\end{proof}

\bibliographystyle{plain}             
\bibliography{stacksfactor}

\end{document}